\documentclass[12pt]{amsart}

\usepackage{amsmath,amssymb,amsfonts,amssymb,amsthm}

\usepackage{verbatim}
\usepackage[usenames]{color}
\usepackage{hyperref}
\usepackage{url}
\usepackage{tikz,tikz-qtree,ifthen,cancel}
\usepackage{array,tikz-qtree,ifthen,cancel}
\usepackage{graphicx}
\usepackage{adjustbox}
\usepackage{amsthm,graphicx,tikz,appendix,tikz-qtree,ifthen,cancel, enumitem}
\usetikzlibrary{calc,shapes,patterns,positioning}

\usepackage{yfonts}

\usepackage{accents}

\usepackage{blindtext}
\usepackage{setspace}

\DeclareFontFamily{U}{mathb}{\hyphenchar\font45}
\DeclareFontShape{U}{mathb}{m}{n}{
      <5> <6> <7> <8> <9> <10> gen * mathb
      <10.95> mathb10 <12> <14.4> <17.28> <20.74> <24.88> mathb12
}{}
\DeclareSymbolFont{mathb}{U}{mathb}{m}{n}
\DeclareMathSymbol{\llcurly}{3}{mathb}{"CE}
\DeclareMathSymbol{\ggcurly}{3}{mathb}{"CF}
\DeclareFontFamily{U}{matha}{\hyphenchar\font45}
\DeclareFontShape{U}{matha}{m}{n}{
      <5> <6> <7> <8> <9> <10> gen * matha
      <10.95> matha10 <12> <14.4> <17.28> <20.74> <24.88> matha12
      }{}
\DeclareSymbolFont{matha}{U}{matha}{m}{n}
\DeclareMathSymbol{\curlywedge} {2}{matha}{"4E}
\DeclareMathSymbol{\curlyvee} {2}{matha}{"4F}

\newtheorem{thm}{Theorem}[section]
\newtheorem*{thmunnum}{Theorem}
\newtheorem{prop}[thm]{Proposition}
\newtheorem*{thmmain}{Main Theorem}

\newtheorem{lem}[thm]{Lemma}
\newtheorem{cor}[thm]{Corollary}

\newtheorem*{thm7.3}{Theorem \ref{thm.MillikenSWP}}

\newtheorem{fact}[thm]{Fact}

\theoremstyle{remark}
\newtheorem{rem}[thm]{Remark}

\theoremstyle{definition}
\newtheorem{defn}[thm]{Definition}

\newtheorem{question}[thm]{Question}

\theoremstyle{remark}

\newcommand{\A}{\mathrm{A}}

\newcommand{\noprint}[1]{\relax}

\begin{document}

\begin{center}
{\Large\sc On the number of $Q$-points}\\[1.8ex]


{\small Silvan Horvath}\\[1.2ex] 
{\scriptsize Department of Mathematics, ETH Z\"urich, 
8092 Z\"urich, Switzerland\\ 
silvan.horvath@math.ethz.ch}\\[1.8ex]

{\small Tan {\"O}zalp}\\[1.2ex] 
{\scriptsize Department of Mathematics, University of Notre Dame,
Notre Dame, IN 46556, USA\\
aozalp@nd.edu}\\[1.8ex]

\end{center}


\begin{quote}
{\small {\bf Abstract.} We show that, up to isomorphism, the number of $Q$-points is either finite, $2^{\mathfrak{d}}$ or $2^{\mathfrak{c}}$. This answers a question asked by Borodulin--Nadzieja, Mart{\'\i}nez-Celis, Morawski and {\'S}wierczy{\'n}ska \cite{bdmcms-measures}, and by Halbeisen and the authors \cite{halhoroza-qpoints}. We also show that under mild hypotheses, the existence of infinitely many $Q$-points implies the existence of non-atomic $Q$-measures, and of $2^{\mathfrak{c}}$-many Tukey-top $Q$-points, strengthening results of Raghavan \cite{raghavan2025q} and of Borodulin--Nadzieja et al. \cite{bdmcms-measures}.
}
\end{quote}

\begin{quote}
\small{{\bf Key-words and phrases\/}: ultrafilter, $Q$-point, $P$-point, near coherence}\\
\small{\bf 2020 Mathematics Subject Classification\/}: { 03E05}\ {03E17}\ {54D80}
\end{quote}


\setcounter{section}{0}
\section{Introduction}

The primary purpose of this paper is to answer a question asked by Borodulin--Nadzieja, Mart{\'\i}nez-Celis, Morawski and {\'S}wierczy{\'n}ska \cite{bdmcms-measures}, as well as by Halbeisen and the authors \cite{halhoroza-qpoints}, of whether the existence of exactly $\aleph_0$ many $Q$-points is consistent. The number of $Q$-points -- or the number of any other type of special ultrafilter mentioned in this paper -- always refers to the number of Rudin-Keisler isomorphism classes of this type, i.e., modulo permutations of $\omega$. 

While it is a classical result of Miller \cite{miller1980there} that the existence of $Q$-points is independent of \textsf{ZFC}, the more general question above has received less attention. The analogous question for $P$-points and Ramsey ultrafilters has been investigated by Shelah \cite[Ch.\,VI,\S5]{she}, who showed that there are models with exactly $\kappa$-many $P$-points and Ramsey ultrafilters for any $0 \leq \kappa \leq \aleph_2$ (also, see Chodounsk\'y \cite{chodounsky2012katowice}). In contrast, our results imply that the number of $Q$-points is neither $\aleph_0$ nor $\aleph_1$. For finite $\kappa$, the existence of exactly $\kappa$ $Q$-points is consistent, as was shown by the authors and Halbeisen in \cite{halhoroza-qpoints}.

We show that the number of $Q$-points is always either finite or equal to one of the cardinals $2^{\mathfrak{d}}$ or $2^{\mathfrak{c}}$. This strong restriction might be unsurprising, since the number of $Q$-points is related to the number of \emph{near coherence classes of ultrafilters}, $Q$-points being isomorphic if and only if they are nearly coherent. By results due to Banakh and Blass \cite{banakh2006number}, the number of near coherence classes is always either finite or $2^{\mathfrak{c}}$. Indeed, most of the arguments that follow are already found in that paper, and we leave open the question whether the dichotomy governing the number of near coherence classes in fact also holds for the number of $Q$-points, i.e., whether there actually are models with  exactly $2^{\mathfrak{d}}$-many $Q$-points and $2^{\mathfrak{d}}<2^{\mathfrak{c}}$.

Unbeknownst to us until recently,  Mill\'an \cite{millan2020filters} independently proved that the number of $Q$-points does not lie in the interval $(2^\mathfrak{d}, 2^\mathfrak{c})$. We prove the following.
\vspace{1ex}
\begin{thmmain}~
\begin{enumerate}
\item If $\mathfrak{u}>\mathfrak{d}$, then there either exists no $Q$-point, or there exists a closed set of $Q$-points of size $2^{\mathfrak{c}}$.
\item If $\mathfrak{u}=\mathfrak{d}$, then the number of $Q$-points is either finite or $2^{\mathfrak{d}}$, or there exists a closed set of $Q$-points of size $2^{\mathfrak{c}}$.
\item If $\mathfrak{u}<\mathfrak{d}$, then the number of $Q$-points is either finite, or there exists a closed set of $Q$-points of size $2^{\mathfrak{c}}$.
\item If there is an $\omega^{\ast}$-discrete ultrafilter of character $>\mathfrak{d}$, then the number of $Q$-points is either finite, or there exists a closed set of $Q$-points of size $2^{\mathfrak{c}}$.
\item If there are infinitely many Ramsey ultrafilters, then there exists a closed set of $Q$-points of size $2^{\mathfrak{c}}$.
\end{enumerate}
\end{thmmain}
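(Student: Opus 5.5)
The plan is to transfer the Banakh--Blass analysis of near coherence classes to $Q$-points. The basic fact used throughout is that two $Q$-points are isomorphic iff they are nearly coherent, i.e.\ iff there is a finite-to-one $f\colon\omega\to\omega$ with $f(\mathcal U)=f(\mathcal V)$.

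\textbf{Step 1: isolated versus accumulating $Q$-points.} Let $\mathcal Q$ be the set of $Q$-points and fix $\mathcal U\in\mathcal Q$. I will show that if $\mathcal U$ is a limit of $Q$-points that are pairwise non-nearly-coherent, then there is a closed set of $Q$-points of size $2^{\mathfrak c}$. To do this, take a discrete sequence $(\mathcal U_n)$ of non-nearly-coherent $Q$-points. For each $\mathcal W\in\beta\omega\setminus\omega$ form the $\mathcal W$-limit $\lim_{\mathcal W}\mathcal U_n$.

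The key claim is that if the $\mathcal U_n$ are sufficiently ``separated'' by a finite-to-one partition, then every limit is again a $Q$-point. Roughly, one chooses an interval partition $(I_k)$ such that each $\mathcal U_n$ concentrates on a set that meets every $I_k$ in at most one point. The limits then inherit this selector property, which is the $Q$-point property relative to that one partition, and one needs to arrange it for all finite-to-one maps simultaneously.

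This is exactly where the cardinal invariants enter. A $\mathfrak d$-sized dominating family of interval partitions suffices, so one only needs to handle $\mathfrak d$ many partitions. If in addition a base of some relevant ultrafilter has size $<\mathfrak d$, or if the $\mathcal U_n$ are Ramsey, then the limits along $2^{\mathfrak c}$ many $\mathcal W$ give pairwise distinct and pairwise non-nearly-coherent $Q$-points. Their closure is homeomorphic to $\beta\omega\setminus\omega$ intersected appropriately, and that yields the closed set of size $2^{\mathfrak c}$.

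\textbf{Step 2: bounding the count from above.} If no such accumulation occurs, every $Q$-point class is ``isolated'' in the near coherence topology. I will then count classes. An isolated class is determined by a $\mathfrak d$-sized amount of data: a dominating family of finite-to-one maps together with a $\mathfrak u$-sized generating set. This bounds the number of classes by $2^{\mathfrak d}$ when $\mathfrak u\le\mathfrak d$.

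Conversely, for the lower bound in case (2), if there are infinitely many classes I will use an ultrafilter $\mathcal W$ of character $\mathfrak u=\mathfrak d$ to produce $2^{\mathfrak d}$ many $Q$-points. This follows Banakh--Blass, where the $\mathcal W$-limits of a sequence of $Q$-points are distinguished by their image under a single finite-to-one map. In case (3), with $\mathfrak u<\mathfrak d$, Blass's near coherence argument (NCF-style: filters of character $<\mathfrak d$ are nearly coherent with everything they cohere with) collapses those limits into finitely many classes unless $2^{\mathfrak c}$ many arise. In case (1), with $\mathfrak u>\mathfrak d$, the same counting shows that any infinite family forces $2^{\mathfrak c}$ many. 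Moreover, a single $Q$-point $\mathcal U$ together with the $2^{\mathfrak c}$ non-nearly-coherent ultrafilters existing when $\mathfrak u>\mathfrak d$ (Banakh--Blass) gives, via products or sums $\mathcal U$-$\lim$, a closed set of $Q$-points of size $2^{\mathfrak c}$.

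\textbf{Step 3: cases (4) and (5).} For (4), the $\omega^\ast$-discrete ultrafilter of character $>\mathfrak d$ plays the role of $\mathcal W$ in Step 1. It makes the limits $Q$-points because it ``diagonalizes'' the $\mathfrak d$ partitions, and its discreteness guarantees that distinct limits are non-nearly-coherent. For (5), infinitely many Ramsey ultrafilters form a discrete sequence, and limits of Ramsey ultrafilters along any $\mathcal W$ are $Q$-points (selective ultrafilters compose well under sums), which gives $2^{\mathfrak c}$ of them directly.

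The main obstacle is the key claim in Step 1, namely that $\mathcal W$-limits of $Q$-points are again $Q$-points. This is false in general. I would first try to prove it under the hypothesis that the $\mathcal U_n$ can be simultaneously separated along a dominating family of size $\mathfrak d$, using the character assumptions to diagonalize.
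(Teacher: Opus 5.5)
There is a genuine gap. Your overall shape matches the paper: limits $\mathcal{V}\text{-}\lim_n\mathcal{U}_n$ of a discrete sequence of pairwise non-isomorphic $Q$-points, with the $Q$-point property reduced to $\mathfrak{d}$ many interval partitions. But everything rests on the ``key claim'' of Step~1, which you concede is false in general, and the idea that replaces it is only gestured at. That idea is the following.

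\textbf{The missing device.} Let $f_I$ collapse the $k$-th interval of $I$ to $k$. Then $\mathcal{V}\text{-}\lim_n\mathcal{U}_n$ contains a selector for $I$ as soon as some $A\in\mathcal{V}$ makes $\langle f_I(\mathcal{U}_n):n\in A\rangle$ discrete: take disjoint $X_n\in f_I(\mathcal{U}_n)$ and selectors $S_n\in\mathcal{U}_n$, and form $\bigcup_{n\in A}S_n\cap f_I^{-1}(X_n)$. A selector in each $\mathcal{U}_n$ is not enough, since a union of selectors is not a selector.
\begin{itemize}
\item (5): the $f_I(\mathcal{R}_n)$ are pairwise distinct because the $\mathcal{R}_n$ are non-isomorphic. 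They are $P$-points, hence form a discrete sequence, so $\bigcap_n\mathcal{R}_n$ is a $Q$-filter. Your stated reason, ``selective ultrafilters compose well under sums'', is false: the sum of copies of one Ramsey ultrafilter (its Fubini square) is never a $Q$-point.
\item (3), (4): the closed set of size $2^{\mathfrak{c}}$ does not come from limits along all $\mathcal{W}$. It comes from limits along $\mathcal{V}\in[\mathcal{G}]$, where $\mathcal{G}$ is generated by one such $A_\alpha$ for each member of a small ($Q$-filter-)test family, all chosen inside an $\omega^{\ast}$-discrete ultrafilter of strictly larger character. Then $[\mathcal{G}]$ is infinite by Proposition~\ref{prop:banakhblasslemma6}, $\mathcal{G}\text{-}\lim_n\mathcal{U}_n$ is a $Q$-filter, Lemma~\ref{lem:rudin} makes the limits distinct, and Proposition~\ref{prop:pospisil} yields $2^{\mathfrak{c}}$ of them. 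Pairwise non-near-coherence is not needed.
\item This is how character $>\mathfrak{d}$ is used in (4); a single $\mathcal{W}$ yields only a single limit.
\item (3) additionally needs three facts. A $\mathcal{W}$ of character $\mathfrak{u}<\mathfrak{d}$ is a $P$-point, so $\mathcal{W}^2$ is $\omega^{\ast}$-discrete of character $\ge\mathfrak{d}$. Also $\mathfrak{d}(\bigcap_n\mathcal{U}_n)\le\mathfrak{u}$. Finally, this non-meager filter has a $Q$-filter-test family of size $\mathfrak{d}(\bigcap_n\mathcal{U}_n)$. Your ``NCF-style collapse'' contains none of this.
\end{itemize}

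\textbf{Step~2.} Nothing shows that an ``isolated'' class is determined by $\mathfrak{d}$-sized data. The real dividing line is character. If a $Q$-point $\mathcal{U}$ has $\chi(\mathcal{U})>\mathfrak{d}$, choose in $\mathcal{U}$ selectors for a $Q$-test family of size $\mathfrak{d}$. They generate $\mathcal{F}\subseteq\mathcal{U}$ with $\chi(\mathcal{F})\le\mathfrak{d}<\chi(\mathcal{U})$, so $[\mathcal{F}]$ is infinite and consists of $Q$-points.
\begin{itemize}
\item This gives (1) from a single $Q$-point, since every ultrafilter has character $\ge\mathfrak{u}$. Your sums over arbitrary non-nearly-coherent ultrafilters cannot work: a $\mathcal{U}$-sum is a $Q$-point only if $\mathcal{U}$-almost all summands are.
\item It also gives the upper bound in (2), since only $\mathfrak{c}^{\mathfrak{d}}=2^{\mathfrak{d}}$ ultrafilters have character $\le\mathfrak{d}$.
\end{itemize}

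\textbf{Lower bound in (2).} One $\mathcal{W}$ of character $\mathfrak{u}=\mathfrak{d}$ gives one limit, not $2^{\mathfrak{d}}$. Also, distinct images under a single map do not prove non-isomorphism; you must separate the images under every member of a test family. The paper builds a binary tree of height $\mathfrak{d}$ of filters of character $<\mathfrak{d}$, where $f_\alpha$ runs through a test family together with the collapsing maps of a $Q$-test family.
\begin{itemize}
\item At stage $\alpha$, Lemma~\ref{lem:ketonen} yields a positive $A$ with $\{f_\alpha(\mathcal{U}_n):n\in A\}$ discrete and with at most one limit point among the $f_\alpha(\mathcal{U}_n)$.
\item Since $\chi<\mathfrak{u}$, $A$ splits into two positive halves.
\end{itemize}
Each branch limit is then a $Q$-point, and Lemma~\ref{lem:rudin}, applied for every $f_\alpha$, makes the $2^{\mathfrak{d}}$ limits pairwise non-isomorphic.
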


The following is a well-known result of Mill\'an \cite{millan2007note} that complements our main theorem.

\vspace{1ex}
\begin{thmunnum}[\cite{millan2007note}]~
\begin{enumerate}
\setcounter{enumi}{5}
\item If $\operatorname{cov}(\mathcal{M})=\mathfrak{d}$, then there exists a closed set of $Q$-points of size $2^{\mathfrak{c}}$.
\end{enumerate}
\end{thmunnum}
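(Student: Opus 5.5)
The plan is to build a filter $\mathcal{F}$ on $\omega$ such that every ultrafilter extending $\mathcal{F}$ is a $Q$-point, and such that infinitely many ultrafilters extend $\mathcal{F}$. The set $K=\{\mathcal{U}\in\omega^\ast:\mathcal{F}\subseteq\mathcal{U}\}$ is closed in $\omega^\ast$. Every infinite closed subset of $\omega^\ast$ contains a countable discrete set whose closure is homeomorphic to $\beta\omega$, so it has size $2^{\mathfrak{c}}$. Hence it suffices to make $K$ infinite and to consist of $Q$-points. To keep $K$ infinite, fix a partition $\omega=\bigsqcup_{k\in\omega}X_k$ into infinite sets and maintain throughout that every $X_k$ is $\mathcal{F}$-positive. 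Then for each $k$ the filter generated by $\mathcal{F}\cup\{X_k\}$ is proper and extends to an ultrafilter, and these ultrafilters are pairwise distinct.

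First I reduce the $Q$-point property to $\mathfrak{d}$ many requirements. Fix a family $\{P_\alpha:\alpha<\mathfrak{d}\}$ of interval partitions $P_\alpha=\{I^\alpha_n:n\in\omega\}$ that is dominating in the usual sense: for every finite-to-one $f:\omega\to\omega$ there is $\alpha$ such that every fiber $f^{-1}(\{m\})$ is contained in $I^\alpha_n\cup I^\alpha_{n+1}$ for some $n$. This follows by the standard translation between $\mathfrak{d}$ and interval partitions. The requirement at stage $\alpha$ is to put into $\mathcal{F}$ a set $A_\alpha$ meeting each $I^\alpha_n\cup I^\alpha_{n+1}$ in at most one point. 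Then $A_\alpha$ meets every fiber of $f$ in at most one point, so any ultrafilter extending $\mathcal{F}$ contains a selector for $f$, i.e.\ is a $Q$-point.

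The construction is a recursion of length $\mathfrak{d}$ producing filter bases $\mathcal{B}_\alpha$ that are closed under finite intersections, increasing in $\alpha$, and of size $<\mathfrak{d}$ for $\alpha<\mathfrak{d}$. The inductive hypothesis is that $G\cap X_k$ is infinite for all $G\in\mathcal{B}_\alpha$ and all $k$. Let $\mathcal{B}_0$ be the cofinite filter base, and take unions at limit stages; the hypothesis is preserved at limits since it concerns single elements.

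At a successor stage I find $A_\alpha$ by a Cohen-genericity argument, and this is the main step. Consider the poset of finite partial selectors: finite sets $s$ meeting each $I^\alpha_n\cup I^\alpha_{n+1}$ in at most one point, ordered by end-extension. This poset is countable, hence equivalent to Cohen forcing. For $G\in\mathcal{B}_\alpha$, $k\in\omega$ and $N\in\omega$, let $D_{G,k,N}$ be the set of conditions $s$ containing a point of $G\cap X_k$ above $N$. This set is dense: given $s$, go past every interval used by $s$ and pick a point of the infinite set $G\cap X_k$ there, leaving an empty interval between it and $s$ so the two-consecutive-intervals condition still holds. There are $|\mathcal{B}_\alpha|\cdot\aleph_0<\mathfrak{d}=\operatorname{cov}(\mathcal{M})$ such dense sets. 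A filter meeting fewer than $\operatorname{cov}(\mathcal{M})$ dense subsets of a countable poset exists, and its union is a selector $A_\alpha$ with $A_\alpha\cap G\cap X_k$ infinite for all $G\in\mathcal{B}_\alpha$ and all $k$. Let $\mathcal{B}_{\alpha+1}$ be the closure of $\mathcal{B}_\alpha\cup\{A_\alpha\}$ under finite intersections. Its elements have the form $G\cap A_\alpha$ with $G\in\mathcal{B}_\alpha$, so the inductive hypothesis is preserved, and $|\mathcal{B}_{\alpha+1}|<\mathfrak{d}$ still holds.

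Finally let $\mathcal{F}$ be generated by $\bigcup_{\alpha<\mathfrak{d}}\mathcal{B}_\alpha$. Every element of this base lies in some $\mathcal{B}_\alpha$, so all $X_k$ remain $\mathcal{F}$-positive, and every stage's requirement has been met. The only delicate points are the fiber-covering reduction to a dominating family of interval partitions and the density check. The latter is exactly where the hypothesis $|\mathcal{B}_\alpha|<\operatorname{cov}(\mathcal{M})$ is used, and this is why the recursion must have length only $\mathfrak{d}$.
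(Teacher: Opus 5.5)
Your argument is correct. The paper does not prove item (6) itself (it is quoted from Mill\'an), so there is no internal proof to compare against. Your route is, however, exactly the template the paper uses for items (1), (3), (4) and (5): produce a $Q$-filter $\mathcal{F}$ with $[\mathcal{F}]$ infinite, then invoke Proposition \ref{prop:pospisil}. Where the paper obtains infiniteness from character counting (Proposition \ref{prop:banakhblasslemma6}) or from Rudin's lemma, you build it in directly by keeping every $X_k$ $\mathcal{F}$-positive, while the $\operatorname{cov}(\mathcal{M})$ hypothesis supplies a generic selector at each of $\mathfrak{d}$ stages.

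Your family $\{P_\alpha\}$ plays the role of the $Q$-test family of Lemma \ref{lem:q-test}, with one worthwhile difference. You demand that $A_\alpha$ meet each $I^\alpha_n\cup I^\alpha_{n+1}$ in at most one point, so your family is a $Q$-\emph{filter}-test family over the Fr\'echet filter. The family in Lemma \ref{lem:q-test}(i) only yields sets meeting each interval in at most two points, and it needs ultrafilterness to finish. This is what lets you conclude that $\mathcal{F}$ itself is a $Q$-filter. That stronger conclusion is what the paper tacitly relies on when it lists (6) among the hypotheses of its corollaries on non-atomic $Q$-measures and Tukey-top $Q$-points.

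One small point to tighten: at limit stages, justify $|\mathcal{B}_\alpha|<\mathfrak{d}$ by the bound $|\mathcal{B}_\alpha|\le|\alpha|+\aleph_0$, not by regularity, since $\mathfrak{d}$ need not be regular. The bound holds because $\mathcal{B}_{\alpha+1}=\mathcal{B}_\alpha\cup\{G\cap A_\alpha: G\in\mathcal{B}_\alpha\}$, so successor steps do not increase the cardinality of the base.
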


These infinite closed sets of $Q$-points we construct will be the set of ultrafilters extending some given $Q$-filter. In addition to proving the Main Theorem, we also strengthen a result of Borodulin--Nadzieja, Mart{\'\i}nez-Celis, Morawski and {\'S}wierczy{\'n}ska \cite{bdmcms-measures} and show that the existence of a $Q$-filter that extends to infinitely many ultrafilters is equivalent to the existence of a \emph{non-atomic $Q$-measure}. Furthermore, we observe that the existence of an infinite closed set of $Q$-points implies the existence of $2^{\mathfrak{c}}$-many pairwise $\leq_{\operatorname{RK}}$-incomparable Tukey-top $Q$-points, which strengthens results of Raghavan \cite{raghavan2025q}.
\section{Preliminaries}

\begin{defn}
A filter $\mathcal{F}$ is a \emph{$Q$-filter} if for every partition $I$ of $\omega$ into intervals, there exists $A \in \mathcal{F}$ such that $A$ intersects each interval in $I$ in at most one point. We call such an $A$ a \emph{selector} for $I$. A $Q$-filter that is an ultrafilter is called a \emph{$Q$-point}..	
\end{defn}

We denote by $\omega^{\ast}$ the Stone-\v{C}ech remainder of $\omega$, i.e., the set of non-principal ultrafilters on $\omega$, equipped with the topology generated by clopen sets of the form $\{\mathcal{U}\in \omega^{\ast}: A \in \mathcal{U}\}$ for $A \in [\omega]^\omega$. Of particular importance in this paper are countably infinite discrete sequences in $\omega^{\ast}$. Note that a sequence  of ultrafilters $\langle \mathcal{U}_n: n \in \omega\rangle$ is discrete if and only if there exists a sequence $\langle A_n: n \in \omega\rangle$ of pairwise disjoint subsets of $\omega$ with $\forall n \in \omega: A_n \in \mathcal{U}_n$.

Given some ultrafilter $\mathcal{V}$ and a sequence $\langle\mathcal{U}_n: n \in \omega\rangle$ in $\omega^{\ast}$, the ultrafilter $\mathcal{V}-\lim_n \mathcal{U}_n$ consists of those $A \subseteq \omega$ for which
\[\{n \in \omega: A \in \mathcal{U}_n\}\in \mathcal{V}.\]
Observe that for any $f\in {^\omega}\omega$ we have $f(\mathcal{V}-\lim_n \mathcal{U}_n)=\mathcal{V}-\lim_n f(\mathcal{U}_n)$, where $f(\mathcal{U})$ is the ultrafilter generated by $\{f[A]: A \in \mathcal{U}\}$.

The following is a result of M.~E.~Rudin \cite{rudin1965types}.

\begin{lem}\label{lem:rudin}
Let $\langle \mathcal{U}_n: n \in \omega\rangle$ be a sequence of ultrafilters and assume that $\mathcal{V}$ and $\mathcal{V}'$ are distinct ultrafilters such that there exists $A \in \mathcal{V}\cap \mathcal{V'}$ for which $\langle \mathcal{U}_n: n \in A\rangle$ is discrete. Then, \[\mathcal{V}-\lim_n \mathcal{U}_n \neq \mathcal{V}'-\lim_n \mathcal{U}_n.\]
\end{lem}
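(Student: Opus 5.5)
Since $\mathcal{V}\neq\mathcal{V}'$ and both contain $A$, we can pick $B\subseteq A$ with $B\in\mathcal{V}$ and $A\setminus B\in\mathcal{V}'$: take any $C\in\mathcal{V}\setminus\mathcal{V}'$ and set $B=C\cap A$. The plan is to produce a single set $X\subseteq\omega$ that lies in one limit ultrafilter but not the other. By discreteness of $\langle \mathcal{U}_n: n\in A\rangle$, as noted in the preliminaries, there are pairwise disjoint sets $A_n\in\mathcal{U}_n$ for $n\in A$.

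Let
\[X=\bigcup_{n\in B} A_n .\]
For each $n\in B$ we have $A_n\subseteq X$, so $X\in\mathcal{U}_n$. Hence $\{n: X\in\mathcal{U}_n\}\supseteq B\in\mathcal{V}$, which gives $X\in\mathcal{V}-\lim_n\mathcal{U}_n$.

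For $n\in A\setminus B$, pairwise disjointness gives $A_n\cap X=\emptyset$. Since $A_n\in\mathcal{U}_n$, it follows that $X\notin\mathcal{U}_n$. Therefore $\{n: X\in\mathcal{U}_n\}$ is disjoint from $A\setminus B\in\mathcal{V}'$, so it is not in $\mathcal{V}'$, and $X\notin\mathcal{V}'-\lim_n\mathcal{U}_n$. This shows the two limits differ.

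There is no real obstacle here. The only point requiring care is that disjointness is used only on the indices in $A$, which is exactly what the hypothesis provides; the indices outside $A$ play no role.
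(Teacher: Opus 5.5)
Your proof is correct and complete. The paper gives no proof of its own and simply cites M.~E.~Rudin. Your separating set $X=\bigcup_{n\in B}A_n$ is the standard argument, and it correctly uses the paper's convention that discreteness of $\langle \mathcal{U}_n : n\in A\rangle$ yields pairwise disjoint $A_n\in\mathcal{U}_n$; that convention, which rules out repeated entries, is exactly what makes the lemma true.
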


Apart from $Q$-points, the following types of special ultrafilters appear in our paper.

\begin{defn}
Let $\mathcal{U}$ be an ultrafilter on $\omega$.
\begin{itemize}
\item $\mathcal{U}$ is a \emph{$P$-point} if for every countable set $\{A_n: n \in \omega\}$ of elements of $\mathcal{U}$, there exists $B \in \mathcal{U}$ with $\forall n \in \omega: B \subseteq^{*}A_n$.
\item $\mathcal{U}$ is a \emph{Ramsey ultrafilter} if for every partition $\mathcal{P}$ of $\omega$ with $\mathcal{P} \cap \mathcal{U}=\varnothing$, $\mathcal{U}$ contains a selector for $\mathcal{P}$. Equivalently, $\mathcal{U}$ is both a $Q$-point and a $P$-point.
\end{itemize}

Additionally, we require the following type of ultrafilter introduced by Banakh and Blass~\cite{banakh2006number}.
\begin{itemize}
\item $\mathcal{U}$ is $\omega^{\ast}$-discrete if for every countable sequence $\langle \mathcal{V}_n: n \in \omega \rangle$ of pairwise distinct ultrafilters, there exists $A \in \mathcal{U}$ such that $\langle \mathcal{V}_n: n \in A\rangle$ is discrete.
\end{itemize}
\end{defn}

The \emph{character} $\chi(\mathcal{F})$ of a filter $\mathcal{F}$ is the least size of a basis $\mathcal{B}$ of $\mathcal{F}$, i.e., the least size of a family $\mathcal{B}\subseteq\mathcal{F}$ such that for all $A\in\mathcal{F}$, there is $B\in\mathcal{B}$ with $B\subseteq A$. The cardinal characteristic $\mathfrak{u}$ is defined as
\[\mathfrak{u}=\min\{\chi(\mathcal{U}): \mathcal{U}\in \omega^{\ast}\}.\]

The second cardinal characteristic we need is the \emph{dominating number} of a filter $\mathcal{F}$.

\begin{defn}
Let $\mathcal{F}$ be a filter. The cardinal $\mathfrak{d}(\mathcal{F})$ denotes the least size of a family $\mathcal{D}\subseteq {^\omega}\omega$ such that for every $f\in {^\omega}\omega $, there is $g\in\mathcal{D}$ with $f\leq_{\mathcal{F}}g$. Here, $f\leq_{\mathcal{F}}g$ means $\{n : f(n)\leq g(n)\}\in\mathcal{F}$. If $\mathcal{F}$ is the Fr\'echet filter, then $\mathfrak{d}(\mathcal{F})=:\mathfrak{d}$.
\end{defn}

The following is a corollary of Proposition 19 and Proposition 20 of \cite{banakh2006number}.

\begin{prop}\label{prop:banakhblassinfiniteint}
Let $\{\mathcal{U}_n: n\in\omega\}$ be a set of ultrafilters and $\mathcal{W}$ a $P$-point not nearly coherent to any of the $\mathcal{U}_n$. Then $\mathfrak{d}(\bigcap_{n\in\omega}\mathcal{U}_n)\leq\chi(\mathcal{W})$.
\end{prop}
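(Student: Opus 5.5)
Write $\mathcal{F}=\bigcap_{n\in\omega}\mathcal{U}_n$ and $\kappa=\chi(\mathcal{W})$, and fix a base $\{B_\alpha:\alpha<\kappa\}$ of $\mathcal{W}$. The plan is to produce, for each $\alpha<\kappa$ and $N\in\omega$, one function $h_{\alpha,N}$, and to show that this family of size $\kappa$ is $\leq_{\mathcal{F}}$-dominating. The definition is
\[h_{\alpha,N}(m)=\min\bigl(B_\alpha\setminus \max(m+1,N)\bigr).\]
This is well defined because $B_\alpha$ is infinite. It then suffices to show: for every increasing $g\in{^\omega}\omega$ there are $\alpha,N$ such that $\{m: g(m)\leq h_{\alpha,N}(m)\}\in\mathcal{U}_n$ for every $n$, since such a set is then in $\mathcal{F}$.

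Fix an increasing $g$. Define an interval partition $\langle J_k:k\in\omega\rangle$ by $i_0=0$, $i_{k+1}=g(i_k)+1$ and $J_k=[i_k,i_{k+1})$. If $m\in J_k$ then $g(m)\le g(i_{k+1})<i_{k+2}$. Hence if a set $B$ is disjoint from $J_k\cup J_{k+1}$, the least element of $B$ above $m$ is at least $i_{k+2}>g(m)$. I therefore want, for each $n$, a set in $\mathcal{U}_n$ consisting of points $m\in J_k$ for which $J_k\cup J_{k+1}$ is almost avoided by a single element of $\mathcal{W}$.

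To get this, consider the two coarser interval partitions $K_j=J_{2j}\cup J_{2j+1}$ and $K'_j=J_{2j+1}\cup J_{2j+2}$ (with $J_0$ as an extra initial piece for $K'$). I use the standard characterization of near coherence: two ultrafilters are nearly coherent iff some finite-to-one monotone map, equivalently some interval partition collapse, identifies them. Since $\mathcal{W}$ is not nearly coherent to $\mathcal{U}_n$, the images of $\mathcal{U}_n$ and $\mathcal{W}$ under the collapse by $K$ are distinct. So there is $X_n\subseteq\omega$ with $\bigcup_{j\in X_n}K_j\in\mathcal{U}_n$ and $\bigcup_{j\notin X_n}K_j\in\mathcal{W}$. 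Likewise, there is $X'_n$ for $K'$.

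Let $C_n=\bigcup_{j\notin X_n}K_j\cap\bigcup_{j\notin X'_n}K'_j\in\mathcal{W}$. Since $\mathcal{W}$ is a $P$-point, there is $B\in\mathcal{W}$ with $B\subseteq^* C_n$ for all $n$. Choose $\alpha$ with $B_\alpha\subseteq B$.

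Now fix $n$ and let $D_n=\bigcup_{j\in X_n}K_j\cap\bigcup_{j\in X'_n}K'_j\in\mathcal{U}_n$. For $m\in D_n\cap J_k$, the $K$-block and the $K'$-block containing $J_k$ together cover $J_k\cup J_{k+1}$, and both are $\mathcal{U}_n$-side blocks. So $C_n$ is disjoint from $J_k\cup J_{k+1}$, and $B_\alpha$ meets it only in the finite set $B_\alpha\setminus C_n$.

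The main obstacle is that this finite exceptional set depends on $n$, while $h$ must not. I would handle it as follows. For a given $n$, take $N_n$ above $B_\alpha\setminus C_n$. Then $g\le h_{\alpha,N_n}$ on $D_n$. Since $h_{\alpha,N}\ge h_{\alpha,0}$ and $h_{\alpha,N}$ agrees with $h_{\alpha,0}$ above $N$, we have $g\le h_{\alpha,0}$ on $D_n\setminus N_n$, which is still in $\mathcal{U}_n$ because $\mathcal{U}_n$ is non-principal. Thus $\{m:g(m)\le h_{\alpha,0}(m)\}\in\mathcal{U}_n$ for every $n$, hence lies in $\mathcal{F}$. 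This shows the family $\{h_{\alpha,0}:\alpha<\kappa\}$ is $\leq_{\mathcal{F}}$-dominating, so $\mathfrak{d}(\mathcal{F})\le\kappa$.

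Two points need checking. First, the reduction to increasing $g$: every function is bounded by an increasing one. Second, the claim that non-near-coherence yields distinct images under every interval collapse; this is the standard equivalence of near coherence with identification by some finite-to-one monotone map.
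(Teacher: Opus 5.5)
Your proof is correct. It also takes a different route from the paper, which gives no argument of its own and simply quotes the statement as a corollary of Propositions 19 and 20 of Banakh and Blass.

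You give a self-contained proof that fuses everything into one construction:
\begin{itemize}
\item Non-near-coherence of $\mathcal{W}$ with each $\mathcal{U}_n$, applied to the two staggered collapses along $K$ and $K'$, yields $C_n\in\mathcal{W}$ that is disjoint from $J_k\cup J_{k+1}$ whenever $J_k$ meets $D_n\in\mathcal{U}_n$.
\item The $P$-point hypothesis is used exactly once, to find a single base element $B_\alpha$ almost contained in every $C_n$.
\item The $n$-dependent finite errors are absorbed by discarding an initial segment of each $D_n$. This is exactly where non-principality of the $\mathcal{U}_n$ is used, which is implicit in the paper's convention.
\end{itemize}
What your route buys is transparency about where each hypothesis enters. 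Without the countable family, the same ``next element of $B_\alpha$'' functions show directly that $\mathfrak{d}(\mathcal{F})\le\chi(\mathcal{W})$ for any filter $\mathcal{F}$ not nearly coherent with the ultrafilter $\mathcal{W}$, with no $P$-point assumption. So the $P$-point property serves only to make the choice of $B_\alpha$ uniform in $n$. The citation, in exchange, keeps the paper short.

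Two small points. First, you do not need any characterization of near coherence. The map $f_K$ collapsing each $K_j$ to $j$ is itself finite-to-one, so $f_K(\mathcal{U}_n)=f_K(\mathcal{W})$ would already witness near coherence by the definition. Second, you should take $g$ strictly increasing, so that $g(m)\ge m$ and the $i_k$ really increase.
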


It is well-known that any ultrafilter $\mathcal{U}$ with $\chi(\mathcal{U})<\mathfrak{d}$ is a $P$-point (see Ketonen \cite{ketonen1976existence}). Moreover, for any ultrafilter $\mathcal{U}$, the ultrafilter $\mathcal{U}^2$ is never a $P$-point.\footnote{Here, $\mathcal{U}^2:=\mathcal{U}-\lim_{n}\mathcal{V}_n$, where $\{\mathcal{V}_n: n\in\omega\}$ is any discrete subset of $\omega^*$ such that each $\mathcal{V}_n$ is isomorphic to $\mathcal{U}$.} Therefore, by Corollary 2.9 of Baumgartner \cite{baumgartner-uf} and Proposition 12 of Banakh and Blass \cite{banakh2006number}, we obtain the following (see also Lemma 11 of \cite{banakh2006number}).
\begin{lem}\label{lem:baumgartner}
Let $\mathcal{U}$ be a $P$-point. Then both $\mathcal{U}$ and $\mathcal{U}^2$ are $\omega^*$-discrete ultrafilters and $\chi(\mathcal{U}^2)\geq \mathfrak{d}$.
\end{lem}

We say that two filters $\mathcal{F}$ and $\mathcal{G}$ are \emph{nearly coherent} if there exists some finite-to-one function $f \in {^\omega}\omega$ such that $f(\mathcal{F}) \cup f(\mathcal{G})$ generates a filter. Restricted to the set of ultrafilters, near coherence is an equivalence relation (see \cite{blass86nearcoherence}) and its equivalence classes are called \emph{near coherence classes}. It is well-known and straightforward to verify that two $Q$-points are nearly coherent if and only if they are isomorphic.

The following is a consequence of the fact that $|\omega^\omega|=\mathfrak{c}$. Note that $2^{\mathfrak{c}}$ is cardinality of the entire space $\omega^{\ast}$.
\begin{fact}\label{fact:numberofiso}
Let $\mathcal{C}$ be a set of ultrafilters with $|\mathcal{C}|=2^\mathfrak{c}$. Then there exists $\mathcal{C}'\in[\mathcal{C}]^{2^{\mathfrak{c}}}$ consisting of pairwise non-isomorphic ultrafilters.
\end{fact}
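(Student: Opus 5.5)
Each isomorphism class of ultrafilters has size at most $\mathfrak{c}$, and the plan is a plain cardinality count built on that bound. An ultrafilter $\mathcal{V}$ is isomorphic to $\mathcal{U}$ exactly when $\mathcal{V}=\pi(\mathcal{U})$ for some permutation $\pi$ of $\omega$. So the isomorphism class of $\mathcal{U}$ is the image of the map $\pi\mapsto\pi(\mathcal{U})$ defined on the set of permutations of $\omega$. That set is a subset of ${^\omega}\omega$, and $|{^\omega}\omega|=\mathfrak{c}$, so every isomorphism class has cardinality at most $\mathfrak{c}$.

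Next, partition $\mathcal{C}$ into the traces of isomorphism classes, $\mathcal{C}=\bigcup_{i\in I}(\mathcal{C}\cap[\mathcal{U}_i])$, where $\{[\mathcal{U}_i]: i\in I\}$ lists the distinct classes that meet $\mathcal{C}$. Each piece has size at most $\mathfrak{c}$, hence
\[2^{\mathfrak{c}}=|\mathcal{C}|\leq |I|\cdot\mathfrak{c}=\max(|I|,\mathfrak{c}).\]
Since $\mathfrak{c}<2^{\mathfrak{c}}$ by Cantor's theorem, this forces $|I|\geq 2^{\mathfrak{c}}$. In the other direction, $|I|\leq|\mathcal{C}|=2^{\mathfrak{c}}$, so $|I|=2^{\mathfrak{c}}$.

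Finally, use the axiom of choice to pick one ultrafilter $\mathcal{U}_i\in\mathcal{C}\cap[\mathcal{U}_i]$ for each $i\in I$, and let $\mathcal{C}'=\{\mathcal{U}_i: i\in I\}$. Its members lie in distinct isomorphism classes, so they are pairwise non-isomorphic, and $|\mathcal{C}'|=|I|=2^{\mathfrak{c}}$. No step poses a real obstacle. The only point needing care is the inequality $\mathfrak{c}<2^{\mathfrak{c}}$, which is what rules out fewer than $2^{\mathfrak{c}}$ classes.
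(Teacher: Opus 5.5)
Your proof is correct and follows the paper's approach: the paper justifies this fact only by citing $|{^\omega}\omega|=\mathfrak{c}$, which bounds each isomorphism class by $\mathfrak{c}<2^{\mathfrak{c}}$, and your counting and choice of representatives supplies the routine details. One cosmetic point is that the paper uses $[\mathcal{F}]$ for the set of ultrafilters extending $\mathcal{F}$, so it would be cleaner to name the classes, say, $E_i$ and choose $\mathcal{V}_i\in\mathcal{C}\cap E_i$ rather than reusing $\mathcal{U}_i$ circularly.
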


For a filter $\mathcal{F}$, we denote by $[\mathcal{F}]$ the set of ultrafilters extending $\mathcal{F}$. Note that every nonempty closed subset of $\omega^{\ast}$ is of the form $[\mathcal{F}]$ for some filter $\mathcal{F}$. We will need the following two propositions on the cardinality of $[\mathcal{F}]$. The first is a theorem of Posp\'i\v{s}il \cite{pospisil-ind}, and the second is Lemma 6 of Banakh and Blass \cite{banakh2006number}. We provide the proofs for completeness.

\begin{prop}[\cite{pospisil-ind}]\label{prop:pospisil}
Let $\mathcal{F}$ be a filter. If $[\mathcal{F}]$ is infinite, then it has cardinality $2^{\mathfrak{c}}$.
\end{prop}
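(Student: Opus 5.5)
The plan is to embed a copy of $\beta\omega$ into $[\mathcal{F}]$; the reverse bound $|[\mathcal{F}]|\leq|\omega^\ast|=2^{\mathfrak{c}}$ is trivial. The lower bound will come from the classical facts that $|\beta\omega|=2^{\mathfrak{c}}$ (Posp\'i\v{s}il/Hausdorff, via an independent family of size $\mathfrak{c}$) and that ultrafilter limits along a discrete sequence are injective in the limiting ultrafilter (Lemma \ref{lem:rudin}).

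First, since $[\mathcal{F}]$ is an infinite subset of the compact Hausdorff space $\omega^\ast$ (closed, hence itself compact Hausdorff), it contains a countably infinite discrete subset $\{\mathcal{U}_n:n\in\omega\}$. This is a standard fact: every infinite Hausdorff space contains an infinite discrete subspace. Concretely, I will inductively choose points $\mathcal{U}_n$ and pairwise disjoint sets $A_n\subseteq\omega$ with $A_n\in\mathcal{U}_n$, while keeping infinitely many elements of $[\mathcal{F}]$ outside $\bigcup_{k\leq n}\widehat{A_k}$. At each stage the remaining infinite set contains two distinct ultrafilters, so some set $B$ splits it into two pieces, one of which is infinite; I take a point in the other piece and shrink its witnessing set to be disjoint from the earlier $A_k$. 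The shrinking is possible because the earlier $A_k$'s do not belong to the chosen ultrafilter. The remark after the definition of $\omega^\ast$ then lets me work with pairwise disjoint $A_n\in\mathcal{U}_n$.

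Next, for each $\mathcal{V}\in\omega^\ast$ I consider $\mathcal{V}-\lim_n\mathcal{U}_n$. There are two things to check.
\begin{itemize}
\item The limit extends $\mathcal{F}$. If $X\in\mathcal{F}$, then $X\in\mathcal{U}_n$ for every $n$, so $\{n:X\in\mathcal{U}_n\}=\omega\in\mathcal{V}$.
\item The limit is non-principal. Any finite set belongs to no $\mathcal{U}_n$, so it is not in the limit.
\end{itemize}
Hence $\mathcal{V}\mapsto\mathcal{V}-\lim_n\mathcal{U}_n$ maps $\omega^\ast$ into $[\mathcal{F}]$. Because the whole sequence is discrete, applying Lemma \ref{lem:rudin} with $A=\omega$ shows that this map is injective.

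Finally, $|\omega^\ast|=2^{\mathfrak{c}}$, as already noted in the paper before Fact \ref{fact:numberofiso}, so $|[\mathcal{F}]|\geq 2^{\mathfrak{c}}$. The only step with any real content is extracting the discrete sequence. I expect it to be routine, and the care needed is just in maintaining the infinite remainder during the induction.
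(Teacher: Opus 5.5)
Your proof is correct. Its first half is the same as the paper's: you extract a discrete sequence $\langle\mathcal{U}_n\rangle$ in $[\mathcal{F}]$ with pairwise disjoint $A_n\in\mathcal{U}_n$. The only difference there is that you carry out the induction explicitly, while the paper cites the fact that infinite regular spaces have infinite discrete subsets.

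The routes diverge afterwards. The paper transports an independent family $\{X_\alpha:\alpha<\mathfrak{c}\}$ on the index set through the $A_i$. For each $\varphi\in{}^{\mathfrak{c}}2$ it adjoins to $\mathcal{F}$ either $\bigcup_{i\in X_\alpha}A_i$ or $\bigcup_{i\notin X_\alpha}A_i$, according to $\varphi(\alpha)$. The resulting family has the finite intersection property, is extended arbitrarily to an ultrafilter, and distinctness is immediate.

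You instead transport all of $\omega^\ast$ at once via the canonical map $\mathcal{V}\mapsto\mathcal{V}-\lim_n\mathcal{U}_n$. For this map, containment of $\mathcal{F}$ is automatic and injectivity is Lemma \ref{lem:rudin}. The map is also continuous, so you get a homeomorphic copy of $\omega^\ast$ (indeed of $\beta\omega$, if principal $\mathcal{V}$ are allowed) inside $[\mathcal{F}]$, with no arbitrary choice of ultrafilter extensions.

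The price is that you reduce the proposition to its own special case where $\mathcal{F}$ is the Fr\'echet filter, i.e.\ $|\omega^\ast|=2^{\mathfrak{c}}$. This is legitimate, since the fact is classical and the paper records it as known. But the real combinatorial content, an independent family of size $\mathfrak{c}$, is then hidden in that citation. The paper's argument is self-contained and proves the Fr\'echet case along the way.
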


\begin{proof}
Assume that there are infinitely many ultrafilters extending $\mathcal{F}$. Since any infinite set in a regular space has an infinite discrete subset, we may assume that these ultrafilters form a discrete set. Hence, we find pairwise disjoint sets $A_i \in [\omega]^{\omega}, i \in \omega$, such that each $A_i$ has infinite intersection with every member of $\mathcal{F}$. Let $\mathcal{I}=\{X_\alpha: \alpha \in \mathfrak{c}\}$ be an independent family of cardinality $\mathfrak{c}$. For each $\varphi \in {^\mathfrak{c}}2$, let 
\[\mathcal{F}_{\varphi}:=\mathcal{F}\cup \left\{\bigcup_{i \in X_\alpha}A_i: \varphi(\alpha)=0\right\}\cup \left\{\bigcup_{i \in \omega \setminus X_\alpha}A_i: \varphi(\alpha)=1\right\}.\]
Each $\mathcal{F}_{\varphi}$ can be extended to an ultrafilter $\mathcal{U}_{\varphi}$. By construction, these ultrafilters are pairwise distinct.
\end{proof}

\begin{prop}[\cite{banakh2006number}]\label{prop:banakhblasslemma6}
Let $\mathcal{F}$ be a filter and $\mathcal{U} \supseteq \mathcal{F}$ an ultrafilter such that $\chi(\mathcal{F})< \chi(\mathcal{U})$. Then $[\mathcal{F}]$ has cardinality $2^{\mathfrak{c}}$.
\end{prop}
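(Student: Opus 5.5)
By Proposition \ref{prop:pospisil}, it suffices to show that $[\mathcal{F}]$ is infinite. We argue by contradiction: suppose $[\mathcal{F}]$ is finite, say $[\mathcal{F}]=\{\mathcal{U}_0=\mathcal{U},\mathcal{U}_1,\dots,\mathcal{U}_k\}$, and derive a basis of $\mathcal{U}$ of size $\chi(\mathcal{F})$. That contradicts $\chi(\mathcal{F})<\chi(\mathcal{U})$. (Since $\chi(\mathcal{U})\le\mathfrak{c}$, the hypothesis also tells us $\chi(\mathcal{F})$ is infinite unless $\mathcal{F}$ is principal, but that causes no trouble.)

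The key steps are as follows.

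\emph{Step 1: choose disjoint witnesses.} Since the $\mathcal{U}_i$ are pairwise distinct ultrafilters, pick pairwise disjoint sets $A_0,\dots,A_k$ with $A_i\in\mathcal{U}_i$. By a standard compactness argument, $\mathcal{F}=\bigcap_{i\le k}\mathcal{U}_i$. Indeed, if $X\notin\mathcal{F}$, then $\mathcal{F}\cup\{\omega\setminus X\}$ generates a filter, which extends to some ultrafilter in $[\mathcal{F}]$ not containing $X$.

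\emph{Step 2: find an element of $\mathcal{F}$ avoiding $A_0$ in $\mathcal{U}$-sense.} Set $C=\omega\setminus A_0$. Then $C\in\mathcal{U}_i$ for all $i\ge1$, and hence $A_0\cup C=\omega$ trivially. More usefully, we look at $\mathcal{F}\restriction A_0$. Let $\mathcal{B}$ be a basis of $\mathcal{F}$ with $|\mathcal{B}|=\chi(\mathcal{F})$, and set $\mathcal{B}'=\{B\cap A_0: B\in\mathcal{B}\}$. We claim $\mathcal{B}'$ is a basis of $\mathcal{U}$.

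\emph{Step 3: prove the claim.} Given $Y\in\mathcal{U}$, let $Y'=(Y\cap A_0)\cup(\omega\setminus A_0)$. Then $Y'\in\mathcal{U}_0$, because it contains $Y\cap A_0\in\mathcal{U}$. Also $Y'\in\mathcal{U}_i$ for $i\ge1$, because $\omega\setminus A_0\in\mathcal{U}_i$. So $Y'\in\bigcap_i\mathcal{U}_i=\mathcal{F}$, and there is $B\in\mathcal{B}$ with $B\subseteq Y'$. Then $B\cap A_0\subseteq Y\cap A_0\subseteq Y$, and $B\cap A_0\in\mathcal{U}$. Hence $\chi(\mathcal{U})\le|\mathcal{B}'|\le\chi(\mathcal{F})$, which is the contradiction.

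The only step requiring care is the identity $\mathcal{F}=\bigcap[\mathcal{F}]$ in Step 1, namely that a set belonging to every ultrafilter extending $\mathcal{F}$ already lies in $\mathcal{F}$. This is the usual Stone duality fact, justified by the ultrafilter extension argument given there. If $[\mathcal{F}]$ is intended to include principal ultrafilters, the same argument works verbatim. If only non-principal ultrafilters are counted, one replaces $\mathcal{F}$ by the filter generated by $\mathcal{F}$ together with the Fréchet filter. This does not increase the character beyond $\max(\chi(\mathcal{F}),\aleph_0)<\chi(\mathcal{U})$, because $\chi(\mathcal{U})\ge\mathfrak{u}>\aleph_0$ for non-principal $\mathcal{U}$.
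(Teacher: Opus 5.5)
Your proof is correct and follows the paper's argument. Assuming $[\mathcal{F}]$ is finite, you pick $A_0\in\mathcal{U}$ lying in none of the other extensions and show that the traces $\{B\cap A_0: B\in\mathcal{B}\}$ of a basis of $\mathcal{F}$ form a basis of $\mathcal{U}$, contradicting $\chi(\mathcal{F})<\chi(\mathcal{U})$; you just make explicit the identity $\mathcal{F}=\bigcap[\mathcal{F}]$ (with the Fréchet-filter adjustment for non-principal extensions), which the paper uses implicitly when it asserts that $\mathcal{B}\cup\{A\}$ is a basis of $\mathcal{U}$.
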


\begin{proof}
Let $\mathcal{B}$ be a basis of $\mathcal{F}$ of cardinality $<\chi(\mathcal{U})$. By Lemma \ref{prop:pospisil}, it suffices to show that $[\mathcal{F}]$ is infinite. Hence, assume by contradiction that $\mathcal{F}$ only extends to finitely many ultrafilters $\mathcal{U}, \mathcal{V}_0, \ldots, \mathcal{V}_{n-1}$. Find $A \in \mathcal{U}$ such that $A$ is in none of the $\mathcal{V}_i$. Now, $\mathcal{U}$ is the only ultrafilter extending $\mathcal{F}\cup \{A\}$, which implies that $\mathcal{B}\cup \{A\}$ is a basis of $\mathcal{U}$ of cardinality $<\chi(\mathcal{U})$.
\end{proof}

Before concluding this section, let us review some notions and facts about filters. First, we state the Jalili-Naini--Talagrand theorem. As usual, we endow $\mathcal{P}(\omega)$ with the natural topology homeomorphic to the Cantor space.
\begin{thm}[\cite{talagrand}]\label{talagrand}
Let $\mathcal{F}$ be a filter on $\omega$. The following are equivalent.
\begin{enumerate}
    \item $\mathcal{F}$ has the Baire property.
    \item $\mathcal{F}$ is meager.
    \item There is an interval partition $I=\{I_n: n\in\omega\}$ such that for every $A\in\mathcal{F}$, $\{n: A\cap I_n=\varnothing\}$ is finite.
    \item $\{e_A : A\in\mathcal{F}\}\subseteq\omega^\omega$ is bounded, where $e_A$ denotes the increasing enumeration of $A$.
\end{enumerate}
\end{thm}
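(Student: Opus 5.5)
The plan is the standard cycle of implications $(1)\Rightarrow(2)\Rightarrow(3)\Rightarrow(4)\Rightarrow(1)$. The step $(1)\Rightarrow(2)$ requires the interval-partition characterisation (3), so I will also pass through (3) to get back to (1). Throughout I may assume $\mathcal{F}$ contains the Fréchet filter. If it does not, then some $A\in\mathcal{F}$ is finite, and $\mathcal{F}$ is determined by its trace on $A$. In that case $\mathcal{F}$ is principal-like, hence closed and nowhere dense as a subset of $\mathcal{P}(\omega)$, and all four conditions hold trivially (for (3) and (4) one arranges matters by hand, or assumes nonprincipality as is implicit in the paper).

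\emph{(2)$\Rightarrow$(1)} is trivial, since meager sets have the Baire property. \emph{(3)$\Rightarrow$(2)}: given $I$, the set $\{A\subseteq\omega: A\cap I_n=\varnothing \text{ for infinitely many } n\}$ is comeager. It is an intersection over $k$ of the open dense sets $\{A: \exists n>k\ A\cap I_n=\varnothing\}$, and $\mathcal{F}$ is disjoint from it, so $\mathcal{F}$ is meager. \emph{(3)$\Leftrightarrow$(4)}: if every $A\in\mathcal{F}$ meets all but finitely many $I_n$, then $e_A(k)\le \max I_{k+m_A}$ for some $m_A$, and a single function with $g(k)\ge\max I_{2k}$ dominates all of these modulo finite. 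Conversely, if $e_A\le^* g$ for all $A\in\mathcal{F}$, build intervals with $\min I_{n+1}>g(\max I_n+1)$. Then $A$ has more than $\max I_n+1$ elements below $g(\max I_n+1)$, so for almost all $n$ it has an element in $[\max I_n+1,\ g(\max I_n+1)]\subseteq I_{n+1}$.

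\emph{(1)$\Rightarrow$(3)} is the main obstacle. First, $\mathcal{F}$ cannot be comeager. The map $A\mapsto\omega\setminus A$ is a homeomorphism, and $\mathcal{F}$ is disjoint from its image, so if $\mathcal{F}$ were comeager in some basic open set $[s]$ then its image would be comeager in $[\bar s]$. Using closure of $\mathcal{F}$ under finite modifications (it contains the Fréchet filter), comeagerness in one basic open set spreads to the whole space, and we would have two disjoint comeager sets, which is impossible. So the Baire property gives that $\mathcal{F}$ is meager, and it remains to show $(2)\Rightarrow(3)$. Write $\mathcal{F}\subseteq\bigcup_k N_k$ with the $N_k$ closed nowhere dense and increasing, and replace $\mathcal{F}$ by its upward closure, which is $\mathcal{F}$ itself. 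Recursively choose $n_0<n_1<\dots$ and sets $s_j\subseteq[n_j,n_{j+1})$ such that for every $k\le j$ and every $t\subseteq n_j$, no set $A$ with $A\cap[n_j,n_{j+1})=s_j$ and $A\cap n_j=t$ lies in $N_k$. This is possible by iterating nowhere density over the finitely many $t$, and since $\mathcal{F}$ is upward closed we may take $s_j=[n_j,n_{j+1})$, i.e. the whole interval. So any $A$ containing $I_j:=[n_j,n_{j+1})$ for some $j\ge k$ avoids $N_k$, and this holds uniformly in $A\cap n_j$. Now suppose $A\in\mathcal{F}$ misses infinitely many $I_j$. Then $\omega\setminus A$ contains infinitely many $I_j$. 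I intend to show that $\omega\setminus A$ lies in no $N_k$, but this only gives $\omega\setminus A\notin\bigcup N_k\supseteq\mathcal{F}$, which is no contradiction yet. To close the argument, refine the construction so that the union of any infinitely many $I_j$ is \emph{not} in $\mathcal{F}$ (equivalently, fails to be $\mathcal{F}$-positive appropriately). The correct version uses disjointness from the dual ideal: $A\in\mathcal{F}$ and $B\supseteq$ infinitely many $I_j$ means $B\notin\mathcal{F}^*$, so $A\cap B\ne$ small. I would instead run the standard Talagrand argument applied to the meager set $\mathcal{F}\cup\mathcal{F}^*$, where $\mathcal{F}^*$ is the dual ideal. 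This yields intervals such that every set containing infinitely many $I_j$ lies outside $\mathcal{F}^*$. Taking $B=\omega\setminus A$, which lies in $\mathcal{F}^*$, gives the contradiction, so each $A\in\mathcal{F}$ meets almost all $I_j$.
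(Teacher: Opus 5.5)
The paper does not prove this theorem; it quotes it from Jalili-Naini and Talagrand. So your proposal is judged on its own, against the standard argument.

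\textbf{What works.} Several parts are correct:
\begin{itemize}
\item (2)$\Rightarrow$(1), (3)$\Rightarrow$(2) and (3)$\Rightarrow$(4).
\item Your $0$--$1$-law argument that a filter with the Baire property is meager.
\item (4)$\Rightarrow$(3), once the interval condition is fixed. As written, $\min I_{n+1}>g(\max I_n+1)$ contradicts $\min I_{n+1}=\max I_n+1$. You need $\max I_{n+1}\ge g(\max I_n+1)$, which is what your final inclusion actually uses.
\end{itemize}

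\textbf{The gap is in (2)$\Rightarrow$(3).} You claim that, because $\mathcal{F}$ is upward closed, you may take $s_j=[n_j,n_{j+1})$. This is false. The nowhere dense sets $N_k$ are not upward closed, so nothing transfers from $s_j$ to a superset of $s_j$.

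The resulting statement is also refuted outright. It says every set containing some $I_j$ with $j\ge k$ avoids $N_k$. But $\omega$ contains every $I_j$, so $\omega$ would avoid every $N_k$, while $\omega\in\mathcal{F}\subseteq\bigcup_k N_k$. Upward closure lets you pass from a set \emph{missing} $I_j$ to a larger set agreeing with $s_j$ there, not the other way round.

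Your fallback, \emph{run the standard Talagrand argument on $\mathcal{F}\cup\mathcal{F}^*$}, asserts the conclusion rather than proving it. The step from ``agrees with $s_j$ on infinitely many $I_j$'' to ``contains, or misses, infinitely many $I_j$'' is exactly what is missing, and your first attempt at it was wrong.

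\textbf{The fix is short and needs no dual ideal.} Keep the $s_j$ produced by your construction. They guarantee that any $B$ with $B\cap I_j=s_j$ for some $j\ge k$ lies outside $N_j\supseteq N_k$.
\begin{itemize}
\item Suppose $A\in\mathcal{F}$ and $X=\{j: A\cap I_j=\varnothing\}$ is infinite.
\item Put $A'=A\cup\bigcup_{j\in X}s_j$. Then $A'\in\mathcal{F}$ by upward closure.
\item For each $k$ there is $j\in X$ with $j\ge k$ and $A'\cap I_j=s_j$, so $A'\notin N_k$.
\item Hence $A'\notin\bigcup_k N_k\supseteq\mathcal{F}$, a contradiction.
\end{itemize}

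\textbf{Your degenerate case is wrong.} Failing to contain the Fr\'echet filter does not force a finite member; the filter generated by an infinite coinfinite set is a counterexample. If $\mathcal{F}$ does have a finite member, it is principal: $\mathcal{F}=\{A: K\subseteq A\}$ with $K$ finite. That is a nonempty clopen set, so (1) holds while (2)--(4) fail. The theorem should therefore be read for filters containing the Fr\'echet filter (or at least with no finite members), as the paper implicitly does. It is not a trivially satisfied special case.
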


\pagebreak

\section{Test families, $Q$-test families and $Q$-filter-test families}

The purpose of this section is to adapt the notion of a test family testing near coherence to families of interval partitions that test the $Q$-filter-/$Q$-point property. The following definition is due to Banakh and Blass \cite{banakh2006number} (also, see Blass \cite{blass86nearcoherence}).

\begin{defn}[\cite{banakh2006number}]
	Let $\mathcal{F}$ be a filter. A family $\mathcal{T}\subseteq \omega^\omega$ of finite-to-one functions is a \emph{test family} over $\mathcal{F}$ if for all nearly coherent ultrafilters $\mathcal{U}, \mathcal{V} \in [\mathcal{F}]$, the near coherence of $\mathcal{U}$ and $\mathcal{V}$ is witnessed by some function in $\mathcal{T}$.
\end{defn}

\begin{prop}[Banakh and Blass \cite{banakh2006number}]
	Let $\mathcal{F}$ be a filter. There is a test family of cardinality $\mathfrak{d}(\mathcal{F})$ over $\mathcal{F}$.
\end{prop}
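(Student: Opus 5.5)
Fix a dominating family $\mathcal{D}\subseteq{^\omega}\omega$ modulo $\mathcal{F}$ with $|\mathcal{D}|=\mathfrak{d}(\mathcal{F})$; we may assume every $g\in\mathcal{D}$ is strictly increasing. To each $g$ attach an interval partition $I^g=\{I^g_k:k\in\omega\}$ built by iterating $g$: put $i_0=0$ and $i_{k+1}=g(i_k)+1$, and let $I^g_k=[i_k,i_{k+1})$. Let $h_g$ send every point of $I^g_k$ to $\lfloor k/2\rfloor$, so that consecutive pairs of intervals are merged. Each $h_g$ is finite-to-one. The candidate test family is $\mathcal{T}=\{h_g:g\in\mathcal{D}\}$, and it has size at most $\mathfrak{d}(\mathcal{F})$.

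Now take nearly coherent $\mathcal{U},\mathcal{V}\in[\mathcal{F}]$. By the standard reformulation of near coherence (Blass), there is an interval partition $J=\{J_n\}$ such that
\[\bigcup_{n\in E}J_n\in\mathcal{U}\iff\bigcup_{n\in E}J_n\in\mathcal{V}\quad\text{for all }E\subseteq\omega.\]
Equivalently, the collapsing map $f_J$ satisfies $f_J(\mathcal{U})=f_J(\mathcal{V})$. The coarsening property we rely on is that if $K$ is an interval partition each of whose intervals is a union of intervals of $J$, then $f_K$ factors through $f_J$ and so also witnesses near coherence. More generally, it is enough that the witness map is a function of $f_J$ modulo sets lying in both $\mathcal{U}$ and $\mathcal{V}$.

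To compare $J$ with $\mathcal{D}$, define $f(n)=$ the right endpoint of the $J$-interval following the one containing $n$, so every $[n,f(n)]$ contains a whole $J$-interval. Pick $g\in\mathcal{D}$ with $A=\{n:f(n)\le g(n)\}\in\mathcal{F}$; then $A\in\mathcal{U}\cap\mathcal{V}$. The partition $I^g$ is designed so that whenever $i_k\in A$, the interval $[i_k,i_{k+1})$ contains a full $J$-interval. The difficulty is that domination only holds on $A$, not everywhere. To handle this, split $\omega$ by the parity of the index of the $I^g$-interval containing a point, and intersect with the relevant sets in $\mathcal{U}$ and $\mathcal{V}$. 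Then show that on a set in $\mathcal{U}\cap\mathcal{V}$ the map $h_g$ is determined by the $J$-blocks, in the sense that the fibres of $h_g$ there are unions of $J$-intervals up to a bounded number of boundary effects. From this, $h_g(\mathcal{U})=h_g(\mathcal{V})$ should follow.

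The main obstacle is exactly this point: the dominated points $A$ need not include the left endpoints $i_k$, so iterating $g$ along $I^g$ does not directly give $J$-intervals nested inside $I^g$-pairs. If the direct argument fails, my fallback is to enlarge $\mathcal{T}$ by closing $\mathcal{D}$ under a countable family of operations: compositions $g\circ g$, shifts, and the maps $n\mapsto\max_{m\le n}g(m)$, taking the maximum over finitely many $g$. This keeps $|\mathcal{T}|=\mathfrak{d}(\mathcal{F})$ when $\mathfrak{d}(\mathcal{F})$ is infinite; when it is finite, handle it separately, noting that then $\mathcal{F}$ is essentially an ultrafilter-like object and a trivial test family suffices. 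For each $a\in A$ one then has $J$-intervals between $a$ and $g(a)$. The task is to find a single $\tilde g$ in the closure such that the $h_{\tilde g}$-blocks meeting a common set $B\in\mathcal{U}\cap\mathcal{V}$, namely $B=A$ or a suitable union of $J$-intervals, each contain a full $J$-interval. That is the condition under which $h_{\tilde g}$ witnesses the near coherence.
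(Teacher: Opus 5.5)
The paper only cites this result from Banakh--Blass and gives no proof, so your proposal has to be judged on its own. It has a genuine gap at the step that carries the proof: you never show that some member of $\mathcal T$ witnesses the near coherence of $\mathcal U$ and $\mathcal V$. The argument stops at ``should follow''. The fallback (closing $\mathcal D$ under compositions, shifts and maxima) is not an argument and does not address the real issue.

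Moreover, one map $h_g$ per $g$ cannot work the way you intend. Suppose $g(n)\ge n+3$, the only non-singleton $J$-intervals are the pairs $\{i_{4m+2}-1,i_{4m+2}\}$, and $\mathcal U,\mathcal V$ are the images of a nonprincipal ultrafilter under $m\mapsto i_{4m+2}-1$ and $m\mapsto i_{4m+2}$. Then $\mathcal U,\mathcal V$ extend the Fr\'echet filter, $f_J(\mathcal U)=f_J(\mathcal V)$, and your $f$ is below $g$ everywhere. Still, $h_g(\mathcal U)$ contains the even numbers while $h_g(\mathcal V)$ contains the odd ones. (Separately, $\mathfrak d(\mathcal F)$ is never finite: for $g_0,\dots,g_k$ and each $j$, the set $\{n: 1+\max_i g_i(n)\le g_j(n)\}$ is empty. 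So your finite case is vacuous.)

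Two observations close the gap. First, no left endpoint needs to lie in $A$. Since $g$ is strictly increasing, every $n\in I^g_k$ satisfies $g(n)<g(i_{k+1})+1=i_{k+2}$, so $[n,g(n)]\subseteq I^g_k\cup I^g_{k+1}$.

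Second, do not aim for whole $J$-intervals inside $I^g$-blocks. Use the criterion you yourself stated: a finite-to-one $h$ witnesses near coherence as soon as there is $Y\in\mathcal U\cap\mathcal V$ with $h$ constant on every $J_m\cap Y$. Indeed, then $h^{-1}(Z)\cap Y=Y\cap\bigcup_{m\in E'}J_m$ for a suitable $E'$, and such sets lie in $\mathcal U$ if and only if they lie in $\mathcal V$.

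Your $f$ satisfies $f(n)\ge\max J_m$ for $n\in J_m$. So letting $a=\min(A\cap J_m)$ gives $A\cap J_m\subseteq[a,g(a)]$, which meets at most two consecutive $I^g$-blocks. Therefore put both $h^0_g(n)=\lfloor k/2\rfloor$ and $h^1_g(n)=\lfloor (k+1)/2\rfloor$ (for $n\in I^g_k$) into $\mathcal T$. Let $E^0$ be the set of $m$ such that $h^0_g$ is constant on $A\cap J_m$, and $E^1=\omega\setminus E^0$; then $h^1_g$ is constant on $A\cap J_m$ for $m\in E^1$.

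One of the sets $X^i=\bigcup_{m\in E^i}J_m$ is in $\mathcal U$, and hence in $\mathcal V$, since it is a union of $J$-intervals. With $Y=A\cap X^i$, the map $h^i_g$ is a witness. This is the same trick of restricting to the domination set as in the proof of Lemma~\ref{lem:q-test}(ii), except that the ultrafilter chooses between the two pairings instead of the gluing along $A^f$. No closure of $\mathcal D$ under further operations is needed, and $|\mathcal T|\le\mathfrak d(\mathcal F)$.
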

 
We adapt this notion as follows.
 
\begin{defn}
Let $\mathcal{F}$ be a filter and let $\mathcal{I}$ be a collection of interval partitions of $\omega$.
\begin{itemize}
	\item $\mathcal{I}$ is a \emph{$Q$-test family} over $\mathcal{F}$ if whenever $\mathcal{U}\in[\mathcal{F}]$ is such that $\mathcal{U}$ contains a selector for each $I\in\mathcal{I}$, then $\mathcal{U}$ is in fact a $Q$-point.
	\item $\mathcal{I}$ is a \emph{$Q$-filter-test family} over $\mathcal{F}$ if whenever $G \supseteq \mathcal{F}$ is a filter such that $\mathcal{G}$ contains a selector for each $I\in\mathcal{I}$, then $\mathcal{G}$ is in fact a $Q$-filter.
\end{itemize}
\end{defn}

\begin{lem}\label{lem:q-test}
Let $\mathcal{F}$ be a filter
\begin{enumerate}[label=(\roman*)]
	\item There is a $Q$-test family of cardinality $\mathfrak{d}(\mathcal{F})$ over $\mathcal{F}$.
	\item If $\mathcal{F}$ is non-meager, there is a $Q$-filter-test family over $\mathcal{F}$ of cardinality $\mathfrak{d}(\mathcal{F})$.
\end{enumerate}
\end{lem}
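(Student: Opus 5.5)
The plan for both parts is the same. Take a family $\mathcal{D}\subseteq{^\omega}\omega$ of size $\mathfrak{d}(\mathcal{F})$ that is dominating modulo $\mathcal{F}$, and turn each $g\in\mathcal{D}$ into a few interval partitions built from $g$. We may assume every $g\in\mathcal{D}$ is nondecreasing with $g(k)\geq k$: replacing $g$ by $k\mapsto\max(k,\max_{j\leq k}g(j))$ only makes it larger, so the family stays dominating mod $\mathcal{F}$. For such $g$ define $I_g=\{[i_n,i_{n+1}):n\in\omega\}$ by $i_0=0$ and $i_{n+1}=g(i_n)+1$. The key feature is that for every $k<i_{n+1}$ we have $g(k)\leq g(i_{n+1}-1)<i_{n+2}$. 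Hence if $k<i_{n+1}$ and $x\geq i_{n+2}$, then $x>g(k)$, i.e. $x$ lies beyond $g(k)$.

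Now fix an arbitrary interval partition $J$. Define $f(k)$ to be the right endpoint of the $J$-interval immediately following the $J$-interval containing $k$. Pick $g\in\mathcal{D}$ and $B\in\mathcal{F}$ with $f\leq g$ on $B$. Let $a<a'$ be points of $B$ that lie in $I_g$-intervals separated by at least one full $I_g$-interval. By the key feature, $a'>g(a)\geq f(a)$. So $a'$ lies at least two $J$-intervals past $a$; in particular $a$ and $a'$ are not in the same $J$-interval, and not in adjacent ones either.

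For (i), let $\mathcal{U}\in[\mathcal{F}]$ contain a selector $A$ for every $I_g$, and enumerate $A\cap B\in\mathcal{U}$ increasingly as $a_0<a_1<\cdots$. Consecutive elements occupy distinct $I_g$-intervals, so $a_m$ and $a_{m+2}$ are separated by a full $I_g$-interval. By the previous paragraph, $\{a_{2m}\}$ and $\{a_{2m+1}\}$ are both selectors for $J$, and one of them lies in $\mathcal{U}$. Thus $\{I_g:g\in\mathcal{D}\}$ is a $Q$-test family of size $\mathfrak{d}(\mathcal{F})$.

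For (ii) the even/odd split is the obstacle, since a filter $\mathcal{G}$ need not choose a side. I would remove the need to split by enlarging the test family. Alongside $I_g$, add the two coarsenings
\[
I_g^{0}=\{[i_{2n},i_{2n+2})\}\quad\text{and}\quad I_g^{1}=\{[i_0,i_1)\}\cup\{[i_{2n+1},i_{2n+3})\}.
\]
Let $A,A^0,A^1\in\mathcal{G}$ be selectors for $I_g,I_g^0,I_g^1$, and set $C=A\cap A^0\cap A^1\cap B\in\mathcal{G}$. Any two elements of $C$ cannot lie in the same $I_g$-interval, nor in two adjacent ones, because every pair of adjacent $I_g$-intervals is contained in a single block of $I_g^0$ or of $I_g^1$. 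So any two elements of $C$ are separated by a full $I_g$-interval, hence lie two $J$-intervals apart, and $C$ itself is a selector for $J$. The resulting family still has size $\mathfrak{d}(\mathcal{F})$ (three partitions per $g$).

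The step I expect to need care is exactly where the hypothesis that $\mathcal{F}$ is non-meager enters in (ii). The argument above uses only domination modulo $\mathcal{F}$. I would therefore check whether non-meagerness is needed to guarantee that $\mathcal{D}$ can be taken dominating mod $\mathcal{F}$ with the normalizations used. Alternatively, it may be needed to make the statement non-vacuous: by Theorem \ref{talagrand}, for meager $\mathcal{F}$ the value $\mathfrak{d}(\mathcal{F})$ and the structure of its extensions behave differently. If a gap appears, I would first try to use Theorem \ref{talagrand}(3) to ensure that every member of $\mathcal{F}$, and hence of $\mathcal{G}$, misses infinitely many blocks of any given coarsening, and argue from that.
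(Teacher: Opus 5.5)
Your proof is correct, and in both parts it follows a different route from the paper's.

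For (i), the paper first handles non-meager $\mathcal{F}$ via (ii). For meager $\mathcal{F}$ it uses $\mathfrak{d}(\mathcal{F})=\mathfrak{d}$ to reduce to the Fr\'echet filter, and then shows that a selector for $J^f$ meets each block of the target partition in at most two points. You work modulo $\mathcal{F}$ directly, which treats every filter uniformly and avoids both the case split and the computation of $\mathfrak{d}(\mathcal{F})$ for meager filters.

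For (ii), the paper uses non-meagerness to pick $A^f\in\mathcal{F}$ that misses infinitely many $J^f$-blocks. It then glues blocks so that no point of $A^f$ lies in the last $J^f$-block of an $I^f$-block. On $A^f$, ``same or adjacent $J^f$-block'' thereby collapses to ``same $I^f$-block''. Your shifted pair-coarsenings $I_g^0,I_g^1$ achieve the same collapse with no help from $\mathcal{F}$, since every pair of adjacent $I_g$-blocks is a single block of one of them. The cost is three partitions per dominating function instead of one, which does not affect the cardinality.

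What you gain is that your argument proves (ii) for every filter. So the non-meager hypothesis is an artifact of the paper's gluing construction. Also, (i) then follows immediately from (ii), because an ultrafilter that is a $Q$-filter is a $Q$-point. The paper only applies (ii) to the non-meager filter $\bigcap_n\mathcal{U}_n$, so nothing downstream changes.

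In particular, the worry in your last paragraph is unfounded. The normalization $g\mapsto\max(k,\max_{j\le k}g(j))$ only increases $g$ pointwise, so it preserves domination modulo any filter. Non-meagerness is not used anywhere, and there is no need to invoke the Jalili-Naini--Talagrand theorem.
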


\begin{proof}

(i) In light of (ii), we may assume that $\mathcal{F}$ is meager. It is not hard to verify that for such an $\mathcal{F}$ we have $\mathfrak{d}(\mathcal{F})=\mathfrak{d}$. Hence, it suffices to show that there is a $Q$-test family of cardinality $\mathfrak{d}$ over the Fr\'echet filter. Let $\mathcal{D}$ be a dominating family of cardinality $\mathfrak{d}$ and define for each $f\in \mathcal{D}$ an interval partition $J^f=\{[j^f_n, j^f_{n+1}): n\in\omega\}$ such that, for all $n\in\omega$,
\[k\leq j^f_n\Rightarrow f(k)< j^f_{n+1}.\]
Note that whenever $k<k'<f(k)$, then $k$ and $k'$ lie either in the same interval of $J^f$ or in adjacent intervals. We claim that $\mathcal{J}:=\{J^f: f \in \mathcal{D}\}$ is as required. Hence, let $\mathcal{U}\in [\mathcal{F}]$ be such that $\mathcal{U}$ contains a selector for each $J^f$ and consider any interval partition $\{[l_n, l_{n+1}): n \in \omega\}$. Define the function $g$ be to map any $k \in [l_n, l_{n+1})$ to $l_{n+2}$. We find $f \in \mathcal{D}$ such that $g(k)\leq f(k)$ for all $k$ above some $n_0$, as well as a selector $S \in \mathcal{U}$ for $J^f$. Now, it is not hard to see that any $[l_n, l_{n+1})$ intersects $S \setminus n_0 \in \mathcal{U}$ in at most two points. Since $\mathcal{U}$ is an ultrafilter, we are done.

(ii) Let $\mathcal{D}\subseteq {^\omega}\omega$ witness $\mathfrak{d}(\mathcal{F})=|\mathcal{D}|$. Again define for each $f \in \mathcal{D}$ an interval partition $J^f$ so that whenever $k<k'<f(k)$, then $k$ and $k'$ lie either in the same or in adjacent intervals of $J^f$. Since $\mathcal{F}$ is non-meager, we find some $A^f\in\mathcal{F}$ that avoids infinitely many intervals of $J^f$. Let $I^f$ be an interval partition obtained by gluing the blocks of $J^f$ in such a way that each interval of $I^f$ has an interval of $J^f$ that is met by $A^f$ as an initial segment, and has an interval of $J^f$ not met by $A^f$ as a terminal segment. It follows that whenever $k<k'<f(k)$ and $k \in A^f$, then $k$ and $k'$ lie in the same interval of $I^f$. Let $\mathcal{I}=\{I^f : f\in\mathcal{D}\}$. We claim that $\mathcal{I}$ works.

Hence, let $\mathcal{G}\supseteq \mathcal{F}$ be any filter extending $\mathcal{F}$ that contains a selector for each $I^f$. For any given interval partition $\{[l_n, l_{n+1}): n \in \omega\}$, let $g$ again be the function that maps $k \in [l_n, l_{n+1})$ to $l_{n+2}$. Find $f\in\mathcal{D}$ such that $A:=\{k : g(k)\leq f(k)\}\in\mathcal{F}$ and let $S\in \mathcal{G}$ be a selector for $I^f$. Assume towards a contradiction that for some interval $[l_n, l_{n+1})\in P$, the set $\bar S:= S \cap A \cap A^{f}\in \mathcal{G}$ intersects $[l_n, l_{n+1})$ in two distinct points $k<k'$. Since $k \in A$, we have
\[k<k'<l_{n+2}=g(k)\leq f(k).\]
Since $k \in A^f$, it follows that $k$ and $k'$ lie in the same interval of $I^f$, contradicting the assumption that $S$ is a selector for $I^f$.
\end{proof}

\section{Infinite closed sets of $Q$-points}

Recall that the nonempty closed subset of $\omega^{\ast}$ are precisely the sets of the form $[\mathcal{F}]$ for some filter $\mathcal{F}$. By Fact \ref{fact:numberofiso} and Proposition \ref{prop:pospisil}, we have the following.

\begin{fact}
If $\mathcal{F}$ is a $Q$-filter and $[\mathcal{F}]$ is infinite, then $[\mathcal{F}]$ is an infinite closed set of $Q$-points. In particular, the number of $Q$-points is $2^{\mathfrak{c}}$ by Fact \ref{fact:numberofiso}.
\end{fact}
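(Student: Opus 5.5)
The plan is to verify the two assertions in turn: first that $[\mathcal{F}]$ consists of $Q$-points and is closed and infinite, and then that the number of $Q$-points up to isomorphism is $2^{\mathfrak{c}}$.

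\emph{$[\mathcal{F}]$ is an infinite closed set of $Q$-points.} Let $\mathcal{U}\in[\mathcal{F}]$ and let $I$ be any interval partition of $\omega$. Since $\mathcal{F}$ is a $Q$-filter, there is a selector $A\in\mathcal{F}$ for $I$. As $\mathcal{F}\subseteq\mathcal{U}$, we get $A\in\mathcal{U}$, so $\mathcal{U}$ contains a selector for every interval partition. Hence $\mathcal{U}$ is a $Q$-point. Non-principality is automatic, since $[\mathcal{F}]$ is taken inside $\omega^{\ast}$; alternatively, a $Q$-filter contains arbitrarily sparse selectors, so no ultrafilter extending it can be principal. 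The set $[\mathcal{F}]$ is closed because it equals $\bigcap_{A\in\mathcal{F}}\{\mathcal{U}\in\omega^{\ast}:A\in\mathcal{U}\}$, an intersection of clopen sets. It is infinite by hypothesis.

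\emph{The count.} Since $[\mathcal{F}]$ is infinite, Proposition \ref{prop:pospisil} gives $|[\mathcal{F}]|=2^{\mathfrak{c}}$. Fact \ref{fact:numberofiso} then yields a subfamily $\mathcal{C}'\subseteq[\mathcal{F}]$ of size $2^{\mathfrak{c}}$ consisting of pairwise non-isomorphic ultrafilters, all of which are $Q$-points by the previous step. So there are at least $2^{\mathfrak{c}}$ isomorphism classes of $Q$-points. There are at most $2^{\mathfrak{c}}$, because $|\omega^{\ast}|=2^{\mathfrak{c}}$. Hence the number of $Q$-points is exactly $2^{\mathfrak{c}}$.

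The only step with content is Fact \ref{fact:numberofiso}, which I take from the earlier results. If it had to be justified, the argument is that each isomorphism class has size at most $|{}^\omega\omega|=\mathfrak{c}<2^{\mathfrak{c}}$, so a set of size $2^{\mathfrak{c}}$ meets $2^{\mathfrak{c}}$ distinct classes, and choosing one representative from each class gives $\mathcal{C}'$. No real obstacle is expected.
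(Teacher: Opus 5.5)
Your proposal is correct and follows the paper's argument: every ultrafilter extending a $Q$-filter is a $Q$-point, Proposition \ref{prop:pospisil} gives $|[\mathcal{F}]|=2^{\mathfrak{c}}$, and Fact \ref{fact:numberofiso} then yields $2^{\mathfrak{c}}$ pairwise non-isomorphic $Q$-points. One small correction: your alternative reason for non-principality (``arbitrarily sparse selectors'') is invalid, because a singleton is a selector for every interval partition, so a principal ultrafilter is itself a $Q$-filter under the paper's definition; non-principality comes only from the convention $[\mathcal{F}]\subseteq\omega^{\ast}$, which is also the justification you give first.
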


We show that under any of the assumptions
\begin{enumerate}[label=(\roman*)]
\item there is a $Q$-point of character $>\mathfrak{d}$,
\item $\mathfrak{u}< \mathfrak{d}$ and there are infinitely many $Q$-points,
\item there is an $\omega^{\ast}$-discrete ultrafilter of character $>\mathfrak{d}$ and there are infinitely many $Q$-points,
\item there are infinitely many Ramsey ultrafilters,
\end{enumerate}
there exists a $Q$-filter $\mathcal{F}$ such that $[\mathcal{F}]$ is infinite. Note that this then proves items (1), (3), (4), and (5) of the Main Theorem. Item (2) will follow from a slightly more ad hoc construction.

We begin by proving item (i). This theorem was independently proved by Mill\'an \cite{millan2020filters}.

\begin{thm}
Assume that there is a $Q$-point of character $>\mathfrak{d}$. Then there is a $Q$-filter $\mathcal{F}$ such that $[\mathcal{F}]$ is infinite.
\end{thm}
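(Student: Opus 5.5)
Let $\mathcal{U}$ be a $Q$-point with $\chi(\mathcal{U})>\mathfrak{d}$. The plan is to find a subfilter $\mathcal{F}\subseteq\mathcal{U}$ with two properties: $\mathcal{F}$ is itself a $Q$-filter, and $\chi(\mathcal{F})\leq\mathfrak{d}<\chi(\mathcal{U})$. Once we have such an $\mathcal{F}$, Proposition \ref{prop:banakhblasslemma6} gives $|[\mathcal{F}]|=2^{\mathfrak{c}}$, so in particular $[\mathcal{F}]$ is infinite.

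To build $\mathcal{F}$, fix a dominating family $\mathcal{D}$ of size $\mathfrak{d}$. From it we obtain, exactly as in the proof of Lemma \ref{lem:q-test}(i), a family $\mathcal{J}=\{J^f:f\in\mathcal{D}\}$ of interval partitions with the following property. For every interval partition $\{[l_n,l_{n+1}):n\in\omega\}$ there is $f\in\mathcal{D}$ and $n_0$ such that any selector $S$ for $J^f$ satisfies: $S\setminus n_0$ meets each $[l_n,l_{n+1})$ in at most two points. Since $\mathcal{U}$ is a $Q$-point, we may choose for each $f\in\mathcal{D}$ a selector $S_f\in\mathcal{U}$ for $J^f$.

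The naive choice would be the filter generated by the $S_f$ together with the cofinite sets. It has character $\leq\mathfrak{d}$, but its members only meet each interval in at most two points, not one. So we must reduce ``at most two'' to ``at most one'' inside the filter, without going through the ultrafilter.

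The fix is to apply the same trick with a coarser partition. Given $\{[l_n,l_{n+1})\}$, let $L_n=[l_{2n},l_{2n+2})$ be the partition into pairs of consecutive intervals. Apply the property above to $\{L_n\}$, with $f$ chosen to dominate the corresponding $g$. Then $S_f\setminus n_0$ meets each $L_n$ in at most two points, and these are necessarily in the same or adjacent $J^f$-blocks. This still does not give one point per original interval. So instead we enlarge the generating set. For each $f\in\mathcal{D}$, each $m\in\omega$ and each $i<m$, consider the set $S_f^{m,i}$ of elements of $S_f$ lying in $J^f$-blocks with index $\equiv i \pmod m$; exactly one $i$ gives a set in $\mathcal{U}$. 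Let $\mathcal{F}$ be generated by the cofinite sets together with those $S_f^{m,i}$ that lie in $\mathcal{U}$, and close under finite intersections. There are still only $\mathfrak{d}$ generators, so $\chi(\mathcal{F})\leq\mathfrak{d}$.

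It remains to check that $\mathcal{F}$ is a $Q$-filter. Take the set $S_f^{2,i}\in\mathcal{U}$ with $f$ chosen as in the property above. Two of its points in a single $[l_n,l_{n+1})$ would lie in the same or adjacent $J^f$-blocks. Adjacent blocks have different parity, so this is impossible, and two distinct points of a selector cannot lie in the same block. Hence $S_f^{2,i}\setminus n_0\in\mathcal{F}$ is a selector for the given partition.

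The main obstacle is making sure this parity argument really does upgrade ``two points'' to ``one point''. In particular, I must verify that any two points $k<k'$ in the same $[l_n,l_{n+1})$ satisfy $k'<f(k)$ for $k\geq n_0$. That is precisely the adjacency property of $J^f$ in Lemma \ref{lem:q-test}(i). Given this, the argument goes through, and Proposition \ref{prop:banakhblasslemma6} finishes it.
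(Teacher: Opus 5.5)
Your proof is correct. It has the same skeleton as the paper's: take a subfilter $\mathcal{F}\subseteq\mathcal{U}$ generated by $\mathfrak{d}$ many selectors for the dominating-family partitions $J^f$ of Lemma~\ref{lem:q-test}(i), so that $\chi(\mathcal{F})\le\mathfrak{d}<\chi(\mathcal{U})$, and finish with Proposition~\ref{prop:banakhblasslemma6}.

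The one real difference is worth keeping. The paper lets $\mathcal{F}$ be generated by one selector $S^I\in\mathcal{U}$ for each $I$ in a $Q$-test family, and then writes ``Note that $\mathcal{F}$ is a $Q$-filter.'' That claim does not follow from the definition of a $Q$-test family, which only says that \emph{ultrafilters} containing all the $S^I$ are $Q$-points. Read literally, the paper's argument therefore gives only that $[\mathcal{F}]$ is an infinite closed set of $Q$-points. That suffices for counting $Q$-points, but the paper's own closing question treats it as possibly weaker than having a $Q$-filter. The $Q$-filter property is also what the $Q$-measure application later uses.

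Your parity split supplies exactly the missing step. Which of $S_f^{2,0},S_f^{2,1}$ lies in $\mathcal{U}$ depends only on $f$, not on the target partition. So you can add it to the generators without raising the character, and it turns ``at most two points, in adjacent $J^f$-blocks'' into ``at most one point'' inside $\mathcal{F}$ itself. In that respect your version is the more complete one.

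Minor cleanups:
\begin{itemize}
\item Only $m=2$ is needed.
\item The detour through $L_n=[l_{2n},l_{2n+2})$ can be deleted.
\item The check in your last paragraph is simply $k'<l_{n+1}\le l_{n+2}=g(k)\le f(k)$ for $k\ge n_0$, followed by the adjacency property of $J^f$.
\end{itemize}
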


\begin{proof}
Let $\mathcal{U}$ be a $Q$-point of character $>\mathfrak{d}$ and $\mathcal{I}$ a $Q$-test family of interval partitions of cardinality $\mathfrak{d}$ over the Fr\'echet filter. For each $I\in\mathcal{I}$, find a selector $S^I\in\mathcal{U}$ and let $\mathcal{F}\subseteq\mathcal{U}$ be the filter generated by $\{S^I : I\in\mathcal{I}\}$. Note that $\mathcal{F}$ is a $Q$-filter, and also
\[\chi(\mathcal{F})\leq|\mathcal{I}|=\mathfrak{d}<\chi(\mathcal{U}).\]
Hence, $[\mathcal{F}]$ is infinite by Proposition \ref{prop:banakhblasslemma6}.
\end{proof}

As corollaries, we obtain the following.

\begin{cor}
If $\mathfrak{u}>\mathfrak{d}$ and there exists a $Q$-point, then there is a $Q$-filter $\mathcal{F}$ such that $[\mathcal{F}]$ is infinite.
\end{cor}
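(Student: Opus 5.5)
This follows directly from the preceding theorem. We only need to check that under $\mathfrak{u}>\mathfrak{d}$, every $Q$-point has character $>\mathfrak{d}$. Fix a $Q$-point $\mathcal{U}$, which exists by assumption. By the definition of $\mathfrak{u}$ as the minimum of $\chi(\mathcal{V})$ over all $\mathcal{V}\in\omega^{\ast}$, we have $\chi(\mathcal{U})\geq\mathfrak{u}>\mathfrak{d}$.

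Hence $\mathcal{U}$ is a $Q$-point of character $>\mathfrak{d}$, and the previous theorem applies verbatim. Concretely, take a $Q$-test family $\mathcal{I}$ of size $\mathfrak{d}$ over the Fr\'echet filter, given by Lemma \ref{lem:q-test}(i). Choose selectors $S^I\in\mathcal{U}$ for each $I\in\mathcal{I}$, and let $\mathcal{F}$ be the filter they generate. Then $\mathcal{F}$ is a $Q$-filter: every interval partition is dominated in the required sense by some $I\in\mathcal{I}$, as in the proof of Lemma \ref{lem:q-test}, and the selector for $I$ yields a set meeting each interval in at most two points. Moreover $\chi(\mathcal{F})\leq\mathfrak{d}<\chi(\mathcal{U})$, so Proposition \ref{prop:banakhblasslemma6} shows that $[\mathcal{F}]$ has size $2^{\mathfrak{c}}$, and in particular is infinite.

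No real obstacle arises, since the only new input is the trivial inequality $\chi(\mathcal{U})\geq\mathfrak{u}$. The one point deserving care is contained in the preceding theorem rather than here. The $Q$-test family guarantees the $Q$-point property only for ultrafilters, so the claim that $\mathcal{F}$ itself is a $Q$-filter needs an argument. I would note that every ultrafilter extending $\mathcal{F}$ contains a selector for each $I\in\mathcal{I}$ and is therefore a $Q$-point. This is exactly what the corollary needs: $[\mathcal{F}]$ is an infinite closed set of $Q$-points.
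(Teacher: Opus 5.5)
Your first paragraph is exactly the paper's argument. The corollary is stated without proof, as immediate from the preceding theorem, because $\chi(\mathcal{U})\geq\mathfrak{u}>\mathfrak{d}$ for every $\mathcal{U}\in\omega^{\ast}$. Had you stopped there, the proposal would match the paper.

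The trouble is in what you add afterwards. You are right that a $Q$-test family only certifies ultrafilters. The set $S\setminus n_0$ from the proof of Lemma~\ref{lem:q-test}(i) meets each interval of the given partition in at most two points. Only inside an ultrafilter can you split it into two selectors and keep one. So your sentence deducing that $\mathcal{F}$ is a $Q$-filter from the two-point bound is a non sequitur, and the paper's one-line ``Note that $\mathcal{F}$ is a $Q$-filter'' does need more than the definition of a $Q$-test family.

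But your remedy changes the statement. You show that $[\mathcal{F}]$ is an infinite closed set of $Q$-points and call this exactly what the corollary needs. It is not: the corollary asserts that $\mathcal{F}$ itself is a $Q$-filter, which is strictly stronger. For example, let $\mathcal{U}$ be a $Q$-point containing the even numbers and $\mathcal{U}'$ its image under $k\mapsto k+1$. Every ultrafilter extending $\mathcal{U}\cap\mathcal{U}'$ is a $Q$-point. Yet every member of $\mathcal{U}\cap\mathcal{U}'$ contains infinitely many pairs $\{2m,2m+1\}$, so none is a selector for $\{[2m,2m+2):m\in\omega\}$. Whether the weaker notion can be upgraded is precisely the paper's second closing question. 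Your version would still give item (1) of the Main Theorem via Fact~\ref{fact:numberofiso}. It does not give the corollary as stated, nor the paper's construction of a non-atomic $Q$-measure, which uses a selector lying in $\mathcal{F}$.

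The missing idea is a family of $\mathfrak{d}$ partitions that tests filters, not just ultrafilters. Take $J^f=\{[j_n,j_{n+1}):n\in\omega\}$ as in Lemma~\ref{lem:q-test}(i), and replace it by the two partitions $\{[j_{2m},j_{2m+2}):m\in\omega\}$ and $\{[0,j_1)\}\cup\{[j_{2m+1},j_{2m+3}):m\in\omega\}$. A set $T$ that is a selector for both meets every $[j_n,j_{n+2})$ in at most one point. Since $k<k'<f(k)$ forces $k,k'\in[j_n,j_{n+2})$ for some $n$, the argument of Lemma~\ref{lem:q-test}(i) shows that $T\setminus n_0$ is already a selector for the given partition, with no ultrafilter step. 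Choose such selectors in $\mathcal{U}$ for all $f\in\mathcal{D}$. This yields a $Q$-filter $\mathcal{F}\subseteq\mathcal{U}$ with $\chi(\mathcal{F})\leq\mathfrak{d}<\chi(\mathcal{U})$, and Proposition~\ref{prop:banakhblasslemma6} finishes.

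Alternatively, since $\mathcal{U}$ is non-meager, you can add the sets $A^f\in\mathcal{U}$ from the proof of Lemma~\ref{lem:q-test}(ii) to the generators, taking $\mathcal{D}$ dominating modulo finite.
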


\begin{cor}\label{cor:pigeonhole}
Assume that the number of $Q$-points is strictly greater than $2^{\mathfrak{d}}$. Then there is a $Q$-filter $\mathcal{F}$ such that $[\mathcal{F}]$ is infinite.
\end{cor}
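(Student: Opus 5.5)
The plan is to reduce Corollary \ref{cor:pigeonhole} to the preceding theorem by counting $Q$-points of small character. By that theorem, it suffices to produce a $Q$-point of character $>\mathfrak{d}$. We argue by contradiction: suppose every $Q$-point $\mathcal{U}$ satisfies $\chi(\mathcal{U})\leq\mathfrak{d}$. We will show that there are then at most $2^{\mathfrak{d}}$ $Q$-points up to isomorphism, indeed even at most $2^{\mathfrak{d}}$ as actual ultrafilters. This contradicts the hypothesis.

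The counting step runs as follows. Any ultrafilter of character $\leq\mathfrak{d}$ is determined by a basis $\mathcal{B}\subseteq[\omega]^\omega$ with $|\mathcal{B}|\leq\mathfrak{d}$, since $\mathcal{U}$ is exactly the upward closure of $\mathcal{B}$. Hence the map $\mathcal{U}\mapsto\mathcal{B}_{\mathcal{U}}$ (choosing one such basis) is injective into $[\mathcal{P}(\omega)]^{\leq\mathfrak{d}}$. That set has cardinality at most $\mathfrak{c}^{\mathfrak{d}}=(2^{\aleph_0})^{\mathfrak{d}}=2^{\mathfrak{d}}$, using $\mathfrak{d}\geq\aleph_0$. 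So the set of all $Q$-points, and a fortiori the set of their isomorphism classes, has size $\leq 2^{\mathfrak{d}}$. This contradicts the assumption that the number of $Q$-points exceeds $2^{\mathfrak{d}}$.

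Therefore some $Q$-point has character $>\mathfrak{d}$, and the preceding theorem yields a $Q$-filter $\mathcal{F}$ with $[\mathcal{F}]$ infinite. There is no real obstacle here. The only point needing care is the cardinal arithmetic $\mathfrak{c}^{\mathfrak{d}}=2^{\mathfrak{d}}$, together with noting that counting isomorphism classes only lowers the count, so the bound transfers directly.
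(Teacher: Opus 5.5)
Your proposal is correct and is essentially the paper's argument. There are at most $\mathfrak{c}^{\mathfrak{d}}=2^{\mathfrak{d}}$ ultrafilters of character $\leq\mathfrak{d}$, so some $Q$-point must have character $>\mathfrak{d}$, and the preceding theorem then yields the required $Q$-filter.
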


\begin{proof}
Note that there are only $(\mathfrak{c}^{\mathfrak{d}}=2^{\mathfrak{d}})$-many $\leq \mathfrak{d}$-generated ultrafilters. Hence, if there are more than $2^{\mathfrak{d}}$-many $Q$-points, then at least one of them must be $>\mathfrak{d}$-generated. 
\end{proof}

We continue by proving item (ii), using arguments analogous to those in the proof of Proposition 22 of \cite{banakh2006number}.

\begin{thm}\label{thm:ulessd}
Assume that $\mathfrak{u}<\mathfrak{d}$ and that there are infinitely many $Q$-points. Then there is a $Q$-filter $\mathcal{F}$ such that $[\mathcal{F}]$ is infinite. In fact, if $\{\mathcal{U}_n: n\in\omega\}$ is a set of pairwise non-isomorphic $Q$-points, then the closure of $\{\mathcal{U}_n: n\in\omega\}$ in $\omega^*$ contains such an infinite closed set $[\mathcal{F}]$ as a subset.
\end{thm}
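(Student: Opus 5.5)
The plan is to imitate Proposition 22 of \cite{banakh2006number}. Since $\mathfrak{u}<\mathfrak{d}$, there is an ultrafilter $\mathcal{W}$ with $\chi(\mathcal{W})=\mathfrak{u}<\mathfrak{d}$, and by Ketonen's result $\mathcal{W}$ is a $P$-point. Fix pairwise non-isomorphic $Q$-points $\{\mathcal{U}_n: n\in\omega\}$. Two $Q$-points are nearly coherent iff isomorphic, and near coherence is an equivalence relation, so $\mathcal{W}$ is nearly coherent to at most one $\mathcal{U}_n$. Discarding that one, Proposition \ref{prop:banakhblassinfiniteint} applied to $\mathcal{G}:=\bigcap_n \mathcal{U}_n$ gives $\mathfrak{d}(\mathcal{G})\le\chi(\mathcal{W})=\mathfrak{u}<\mathfrak{d}$.

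Next I observe that $\mathcal{G}$ is non-meager. If it were meager, Theorem \ref{talagrand} would give an interval partition $I$ such that every $A\in\mathcal{G}$ meets all but finitely many intervals of $I$. Then every $g$ that, on each interval of $I$, points to the start of the interval after next is dominated mod $\mathcal{G}$ only by functions that dominate in a Fréchet-like sense. This should force $\mathfrak{d}(\mathcal{G})=\mathfrak{d}$, which is the remark made in the proof of Lemma \ref{lem:q-test}(i) and contradicts $\mathfrak{d}(\mathcal{G})<\mathfrak{d}$. So Lemma \ref{lem:q-test}(ii) applies and yields a $Q$-filter-test family $\mathcal{I}$ over $\mathcal{G}$ with $|\mathcal{I}|=\mathfrak{d}(\mathcal{G})<\mathfrak{d}$.

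Now I build the $Q$-filter. Fix a nonprincipal ultrafilter $\mathcal{V}$ on $\omega$; I want every ultrafilter extending $\mathcal{F}$ to lie in the closure of $\{\mathcal{U}_n\}$. For each $I\in\mathcal{I}$ and each $n$, choose a selector $S^I_n\in\mathcal{U}_n$ for $I$. I must then combine these into a single set $S^I$ that is a selector for $I$ (up to finitely many points) and belongs to infinitely many, ideally cofinitely many, $\mathcal{U}_n$. This is the main obstacle, because a union of selectors is not a selector. My first attempt is to pass to a discrete subsequence with disjoint $A_n\in\mathcal{U}_n$ and use that $\mathcal{U}_n$ and $\mathcal{U}_m$ are not nearly coherent. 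Since they are not nearly coherent, the images under the finite-to-one map collapsing the intervals of $I$ are distinct ultrafilters. So I can shrink the selectors so that their $I$-index sets are pairwise disjoint for $n<N$ at each finite stage, and then diagonalize. Having fewer than $\mathfrak{d}$ many partitions should let me handle all $I$ simultaneously. Concretely, I take $\mathcal{F}$ generated by $\mathcal{G}'$-type sets together with $\{S^I: I\in\mathcal{I}\}$, where $\mathcal{G}'=\mathcal{V}\text{-}\lim$-style intersections $\bigcap_{n\in X}\mathcal{U}_n$, $X\in\mathcal{V}$. Then $\mathcal{F}\supseteq\mathcal{G}$ contains a selector for each $I\in\mathcal{I}$, and so $\mathcal{F}$ is a $Q$-filter by the test-family property.

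Finally, $\mathcal{F}$ is contained in $\mathcal{U}_n$ for infinitely many $n$, so $[\mathcal{F}]$ contains infinitely many distinct $\mathcal{U}_n$ and is therefore infinite (of size $2^{\mathfrak{c}}$ by Proposition \ref{prop:pospisil}). Also, $[\mathcal{F}]$ lies in the closure of $\{\mathcal{U}_n\}$, because $\mathcal{F}$ contains $\bigcap_{n\in X}\mathcal{U}_n$ for a suitable infinite $X$. Verifying that each $S^I$ can be arranged to lie in infinitely many $\mathcal{U}_n$ coherently across the fewer than $\mathfrak{d}$ many partitions $I$ is where I expect the real work, and where $\mathfrak{d}(\mathcal{G})<\mathfrak{d}$ must be used.
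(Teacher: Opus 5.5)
Your reduction matches the paper's up to the $Q$-filter-test family $\mathcal{I}$ over your $\mathcal{G}=\bigcap_n\mathcal{U}_n$, with $|\mathcal{I}|\le\mathfrak{u}$. (Your non-meagerness argument is fine, though it is quicker to note that a countable intersection of nonprincipal ultrafilters is never meager.) The proof breaks at the construction of $\mathcal{F}$, which is the step you leave open.

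Your goal that $\mathcal{F}\subseteq\mathcal{U}_n$ for infinitely many $n$ amounts to finding one infinite $X$ such that $\bigcap_{n\in X}\mathcal{U}_n$ is already a $Q$-filter. Equivalently, $\langle f_I(\mathcal{U}_n): n\in X\rangle$ must be discrete for every $I\in\mathcal{I}$ simultaneously, where $f_I$ collapses the intervals of $I$. A diagonal argument handles countably many $I$, but $|\mathcal{I}|$ can be as large as $\mathfrak{u}$, and nothing in the hypotheses produces such an $X$.

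Your concrete recipe also defeats itself. If $\mathcal{F}$ contains $\bigcap_{n\in X}\mathcal{U}_n$ for every $X$ in a nonprincipal ultrafilter $\mathcal{V}$, then $\mathcal{F}\supseteq\mathcal{V}-\lim_n\mathcal{U}_n$, which is an ultrafilter. So $[\mathcal{F}]$ is a singleton, and since the sequence is discrete, that point is none of the $\mathcal{U}_n$.

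The missing idea is to replace the ultrafilter $\mathcal{V}$ by a \emph{non-maximal} filter of small character sitting inside an $\omega^*$-discrete ultrafilter of large character. This is the real role of the $P$-point $\mathcal{W}$, beyond bounding $\mathfrak{d}(\mathcal{G})$.

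\begin{enumerate}
\item By Lemma \ref{lem:baumgartner}, $\mathcal{W}^2$ is $\omega^*$-discrete with $\chi(\mathcal{W}^2)\ge\mathfrak{d}>\mathfrak{u}$.
\item For each $I\in\mathcal{I}$, the ultrafilters $f_I(\mathcal{U}_n)$ are pairwise distinct by non-near-coherence. So there is $A_I\in\mathcal{W}^2$ with $\langle f_I(\mathcal{U}_n): n\in A_I\rangle$ discrete.
\item Choose pairwise disjoint $X_n\in f_I(\mathcal{U}_n)$ and selectors $S_n\in\mathcal{U}_n$. Then $\bigcup_{n\in A_I}S_n\cap f_I^{-1}(X_n)$ is a selector for $I$ lying in every $\mathcal{U}_n$ with $n\in A_I$. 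This is the correct form of your gluing: it works along $A_I$, not along a fixed infinite index set.
\item Let $\mathcal{E}\subseteq\mathcal{W}^2$ be the filter generated by the cofinite sets and the $A_I$. Since $\chi(\mathcal{E})\le\mathfrak{u}<\chi(\mathcal{W}^2)$, Proposition \ref{prop:banakhblasslemma6} shows $[\mathcal{E}]$ is infinite.
\item Put $\mathcal{F}:=\mathcal{E}-\lim_n\mathcal{U}_n$. It contains $\bigcap_n\mathcal{U}_n$ and all the glued selectors, so it is a $Q$-filter by the choice of $\mathcal{I}$.
\item $[\mathcal{F}]$ is infinite because the ultrafilters $\mathcal{V}-\lim_n\mathcal{U}_n$, $\mathcal{V}\in[\mathcal{E}]$, are pairwise distinct by Lemma \ref{lem:rudin}. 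Also $[\mathcal{F}]\subseteq[\bigcap_n\mathcal{U}_n]$, which gives the closure statement.
\end{enumerate}

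The sets $A_I$ need not have an infinite common part. In fact this $[\mathcal{F}]$ contains none of the $\mathcal{U}_n$, so your intermediate goal is not what the theorem needs.
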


\begin{proof}
Let $\langle \mathcal{U}_n: n\in\omega\rangle$ be a sequence of non-isomorphic $Q$-points. We may assume that $\langle \mathcal{U}_n: n\in\omega\rangle$ is discrete, by extracting such a subsequence. Let $\mathcal{W}$ be any ultrafilter with $\chi(\mathcal{W})=\mathfrak{u}<\mathfrak{d}$. It follows that $\mathcal{W}$ is a $P$-point, so by Lemma \ref{lem:baumgartner}, $\mathcal{W}^2$ is an $\omega^*$-discrete ultrafilter with $\mathfrak{d}\leq\chi(\mathcal{W}^2)$. Note that $\mathcal{W}$ and the $\mathcal{U}_n$ are pairwise not nearly coherent.\footnote{We may either assume this without loss of generality, by possibly removing one of the $\mathcal{U}_n$, or we notice that a $Q$-point cannot be nearly coherent to a $<\mathfrak{d}$-generated ultrafilter.} Define $\mathcal{H}:=\bigcap_{n\in\omega}\mathcal{U}_n$. It follows from Proposition \ref{prop:banakhblassinfiniteint} that
\[\mathfrak{d}(\mathcal{H})\leq \chi(\mathcal{W})=\mathfrak{u}<\mathfrak{d}\leq \chi(\mathcal{W}^2).\]

Also, since $\mathcal{H}$ is the intersection of countably many ultrafilters, it is non-meager. Therefore, by Lemma \ref{lem:q-test} (ii), we find a $Q$-filter-test family of interval partitions for $\mathcal{H}$ of size $\mathfrak{u}$, say $\mathcal{I}=\{I_{\alpha}: \alpha < \mathfrak{u}\}$. We now construct a filter $\mathcal{G}$ by a recursive construction of length $\mathfrak{u}$. At a given stage $\alpha<\mathfrak{u}$, $\mathcal{G}_{\alpha}$ will be generated by $<\mathfrak{u}$-many sets, and we will also have $\mathcal{G}_{\alpha}\subseteq \mathcal{W}^2$.

Let $\mathcal{G}_0$ be the Fr\'echet filter. At limit stages, we take the union of the previously constructed filters. Now assume that we have constructed $\mathcal{G}_{\alpha}$. Let $f_{\alpha}:\omega\to\omega$ be the function that is constantly equal to $n$ on the $n$'th interval of $I_\alpha$. Since $\mathcal{W}^2$ is $\omega^*$-discrete and the ultrafilters $f_\alpha(\mathcal{U}_n)$ are pairwise distinct, we can find $A_{\alpha}\in \mathcal{W}^2$ such that $\langle f_{\alpha}(\mathcal{U}_n): n\in A_{\alpha}\}$ is discrete. Define $\mathcal{G}_{\alpha+1}$ to be the filter generated by $\mathcal{G}_{\alpha}$ and $A_{\alpha}$. At the end, set $\mathcal{G}=\bigcup_{\alpha<\mathfrak{u}} \mathcal{G}_{\alpha}$.

Note that $\mathcal{G}\subseteq \mathcal{W}^2$ and $\chi(\mathcal{G})<\chi(\mathcal{W}^2)$. It follows from Proposition \ref{prop:banakhblasslemma6} that $[\mathcal{G}]$ is infinite. Consider the filter
\[\mathcal{F}:=\mathcal{G}-\lim_n \mathcal{U}_n,\]
i.e., the filter consisting of those $A\in [\omega]^\omega$ for which $\{n \in \omega: A \in \mathcal{U}_n\}\in \mathcal{G}$. Note that for any $\mathcal{V}\in [\mathcal{G}]$, the ultrafilter $\mathcal{V}-\lim_n \mathcal{U}_n$ extends $\mathcal{F}$ and these extensions are pairwise distinct by Lemma \ref{lem:rudin}. Hence, $[\mathcal{F}]$ is infinite.

It remains to check that $\mathcal{F}$ is a $Q$-filter. Since $\mathcal{F}$ extends $\mathcal{H}$, it suffices to show that $\mathcal{F}$ contains a selector for $I_{\alpha}$ for any $\alpha < \mathfrak{u}$. By construction, $\langle f_{\alpha}(\mathcal{U}_n): n\in A_{\alpha}\rangle$ is discrete and $A_\alpha \in \mathcal{G}$. For each $n\in A_\alpha$, find $X_n\in f_{\alpha}(\mathcal{U}_n)$ such that the $X_n$ are pairwise disjoint. Also, let $S_n\in\mathcal{U}_n$ be a selector for $I_{\alpha}$. It follows that
\[\bigcup_{n\in A_\alpha} S_n\cap f_{\alpha}^{-1}(X_n)\in \mathcal{F}\]
is a selector for $I_{\alpha}$.
\end{proof}

\pagebreak

Note that essentially the same argument proves item (iii).

\begin{prop}
Assume that there is an $\omega^{\ast}$-discrete ultrafilter of character $>\mathfrak{d}$ and that there are infinitely many $Q$-points. Then there is a $Q$-filter $\mathcal{F}$ such that $[\mathcal{F}]$ is infinite.
\end{prop}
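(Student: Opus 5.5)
We follow the proof of Theorem \ref{thm:ulessd}, replacing $\mathcal{W}^2$ by the given $\omega^{\ast}$-discrete ultrafilter $\mathcal{W}$ with $\chi(\mathcal{W})>\mathfrak{d}$. The point of that proof was to have a $\omega^*$-discrete ultrafilter whose character exceeds the length of the recursion, which was bounded by the size of a $Q$-filter-test family. Here we do not need $\mathfrak{u}<\mathfrak{d}$: we simply use a $Q$-filter-test family of size $\le\mathfrak{d}$.

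Fix a discrete sequence $\langle\mathcal{U}_n:n\in\omega\rangle$ of pairwise non-isomorphic $Q$-points; such a sequence exists since there are infinitely many isomorphism classes and every infinite subset of $\omega^{\ast}$ has an infinite discrete subset. Put $\mathcal{H}:=\bigcap_n\mathcal{U}_n$. As in Theorem \ref{thm:ulessd}, $\mathcal{H}$ is non-meager: by Theorem \ref{talagrand}, for any interval partition each $\mathcal{U}_n$ contains a set avoiding infinitely many intervals, and combining these sets diagonally across the $\mathcal{U}_n$ gives a member of $\mathcal{H}$ that also avoids infinitely many intervals. Trivially $\mathfrak{d}(\mathcal{H})\le\mathfrak{d}$, since a dominating family mod finite dominates modulo any filter containing the Fr\'echet filter. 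Hence Lemma \ref{lem:q-test}(ii) yields a $Q$-filter-test family $\mathcal{I}=\{I_\alpha:\alpha<\kappa\}$ over $\mathcal{H}$ with $\kappa\le\mathfrak{d}$.

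Let $f_\alpha$ collapse the $n$-th interval of $I_\alpha$ to $n$. The ultrafilters $f_\alpha(\mathcal{U}_n)$, $n\in\omega$, are pairwise distinct. Indeed, if $f_\alpha(\mathcal{U}_n)=f_\alpha(\mathcal{U}_m)$ for $n\neq m$, then $\mathcal{U}_n$ and $\mathcal{U}_m$ would be nearly coherent, since $f_\alpha$ is finite-to-one. But $Q$-points are nearly coherent iff isomorphic, contradicting the choice of the sequence. Since $\mathcal{W}$ is $\omega^{\ast}$-discrete, for each $\alpha$ we choose $A_\alpha\in\mathcal{W}$ such that $\langle f_\alpha(\mathcal{U}_n):n\in A_\alpha\rangle$ is discrete. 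No recursion is really needed; we let $\mathcal{G}$ be the filter generated by the Fr\'echet filter together with $\{A_\alpha:\alpha<\kappa\}$. Then $\mathcal{G}\subseteq\mathcal{W}$ and $\chi(\mathcal{G})\le\kappa\cdot\aleph_0\le\mathfrak{d}<\chi(\mathcal{W})$, so $[\mathcal{G}]$ is infinite by Proposition \ref{prop:banakhblasslemma6}.

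Set $\mathcal{F}:=\mathcal{G}-\lim_n\mathcal{U}_n$. Since the $\mathcal{U}_n$ are discrete, distinct $\mathcal{V}\in[\mathcal{G}]$ give distinct limits $\mathcal{V}-\lim_n\mathcal{U}_n\in[\mathcal{F}]$ by Lemma \ref{lem:rudin}, so $[\mathcal{F}]$ is infinite. Clearly $\mathcal{F}\supseteq\mathcal{H}$. For each $\alpha$, pick pairwise disjoint $X_n\in f_\alpha(\mathcal{U}_n)$ for $n\in A_\alpha$, and selectors $S_n\in\mathcal{U}_n$ for $I_\alpha$. Then $\bigcup_{n\in A_\alpha}S_n\cap f_\alpha^{-1}(X_n)$ is a selector for $I_\alpha$. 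It lies in $\mathcal{F}$ because it belongs to $\mathcal{U}_n$ for every $n\in A_\alpha\in\mathcal{G}$. Since $\mathcal{I}$ is a $Q$-filter-test family over $\mathcal{H}$, $\mathcal{F}$ is a $Q$-filter.

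The main points to check carefully are the non-meagerness of $\mathcal{H}$ and the pairwise distinctness of the $f_\alpha(\mathcal{U}_n)$. Both are routine given the facts recalled above. The rest transfers verbatim from Theorem \ref{thm:ulessd}.
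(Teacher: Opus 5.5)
Your proof is correct and is essentially the paper's own. The paper likewise reruns the proof of Theorem~\ref{thm:ulessd} with the given $\omega^{\ast}$-discrete $\mathcal{W}$ in place of $\mathcal{W}^2$ and a $Q$-filter-test family of size $\le\mathfrak{d}$ over $\bigcap_n\mathcal{U}_n$. Your observation that no recursion is needed is right, since each $A_\alpha$ depends only on $f_\alpha$.

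One minor caveat concerns your diagonal sketch for the non-meagerness of $\mathcal{H}$, which the paper simply asserts. It works only if the sets $A_n\in\mathcal{U}_n$ are chosen inductively, so that $A_0\cup\dots\cup A_n$ still avoids infinitely many intervals. Chosen independently of each other, they cannot in general be combined this way.
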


\begin{proof}
Let $\langle \mathcal{U}_n: n \in \omega \rangle$ be a discrete sequence of pairwise non-isomorphic $Q$-points and $\mathcal{W}$ an $\omega^{\ast}$-discrete ultrafilter with $\chi(\mathcal{W})>\mathfrak{d}$. Fix a $Q$-filter-test family $\mathcal{I}$ over $\bigcap_{n \in \omega}\mathcal{U}_n$ of cardinality $\mathfrak{d}$. By induction on $\mathfrak{d}$, construct the filter $\mathcal{G}\subseteq \mathcal{W}$ as in the proof of Theorem \ref{thm:ulessd}. Again, $\mathcal{G}-\lim_n \mathcal{U}_n$ serves as the required $\mathcal{F}$.
\end{proof}

Finally, we prove item (iv).

\begin{prop}
Assume that there are infinitely many pairwise non-isomorphic Ramsey ultrafilters. Then there is a $Q$-filter $\mathcal{F}$ such that $[\mathcal{F}]$ is infinite.
\end{prop}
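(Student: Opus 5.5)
Let $\langle \mathcal{U}_n : n\in\omega\rangle$ be pairwise non-isomorphic Ramsey ultrafilters. Passing to a subsequence, we may assume the sequence is discrete. I will follow the template of Theorem \ref{thm:ulessd}, with the $\omega^*$-discrete ultrafilter supplied by a Ramsey ultrafilter itself. Take $\mathcal{W}:=\mathcal{U}_0$ and drop it from the sequence, so that $\mathcal{W}$ is a $P$-point not nearly coherent to any of the remaining $\mathcal{U}_n$. This holds because they are non-isomorphic $Q$-points, and $Q$-points are nearly coherent iff isomorphic. By Lemma \ref{lem:baumgartner}, $\mathcal{W}^2$ is $\omega^*$-discrete with $\chi(\mathcal{W}^2)\geq\mathfrak{d}$. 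Set $\mathcal{H}:=\bigcap_n\mathcal{U}_n$. Proposition \ref{prop:banakhblassinfiniteint} gives $\mathfrak{d}(\mathcal{H})\leq\chi(\mathcal{W})$. Since $\mathcal{H}$ is non-meager, Lemma \ref{lem:q-test}(ii) yields a $Q$-filter-test family $\mathcal{I}$ over $\mathcal{H}$ with $|\mathcal{I}|\leq\chi(\mathcal{W})$.

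Next I run the recursion from the proof of Theorem \ref{thm:ulessd} along $\mathcal{I}$. For each $I\in\mathcal{I}$, let $f_I$ collapse the intervals of $I$. The ultrafilters $f_I(\mathcal{U}_n)$ are pairwise distinct, because distinct $Q$-points $\mathcal{U}_n,\mathcal{U}_m$ with $f_I(\mathcal{U}_n)=f_I(\mathcal{U}_m)$ would be nearly coherent. Hence $\omega^*$-discreteness of $\mathcal{W}^2$ gives $A_I\in\mathcal{W}^2$ such that $\langle f_I(\mathcal{U}_n): n\in A_I\rangle$ is discrete. Let $\mathcal{G}\subseteq\mathcal{W}^2$ be generated by $\{A_I: I\in\mathcal{I}\}$, and set $\mathcal{F}:=\mathcal{G}-\lim_n\mathcal{U}_n$. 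Exactly as before, $\mathcal{F}\supseteq\mathcal{H}$ contains a selector for every $I\in\mathcal{I}$, so $\mathcal{F}$ is a $Q$-filter. By Lemma \ref{lem:rudin}, $[\mathcal{F}]$ is infinite as soon as $[\mathcal{G}]$ is.

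The main obstacle is showing that $[\mathcal{G}]$ is infinite. We have $\chi(\mathcal{G})\leq\chi(\mathcal{W})$, and Proposition \ref{prop:banakhblasslemma6} needs $\chi(\mathcal{G})<\chi(\mathcal{W}^2)$, but Lemma \ref{lem:baumgartner} only guarantees $\chi(\mathcal{W}^2)\geq\mathfrak{d}$. I would split into cases.

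\emph{Case 1:} some $\mathcal{U}_n$ has character $>\mathfrak{d}$. Then it is a $Q$-point of character $>\mathfrak{d}$, and the theorem on $Q$-points of character $>\mathfrak{d}$ finishes the proof.

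\emph{Case 2:} all $\mathcal{U}_n$ have character $\leq\mathfrak{d}$. Then I would replace Proposition \ref{prop:banakhblasslemma6} by a direct argument: build $\mathcal{G}$ so that it avoids deciding a fixed infinite discrete family of sets. The goal is to show that $\mathcal{G}$ does not generate an ultrafilter on a set in $\mathcal{W}^2$ on which $\mathcal{W}^2$ is a limit of copies of $\mathcal{W}$. Concretely, I would first try to show that $\chi(\mathcal{W}^2)>\chi(\mathcal{W})$ whenever $\chi(\mathcal{W})\leq\mathfrak{d}$. The idea is that a basis of $\mathcal{W}^2$ must code a $\mathcal{W}$-dominating family of functions on top of a basis of $\mathcal{W}$, and since $\mathfrak{d}(\mathcal{W})$ is large for a $P$-point this should give $\chi(\mathcal{W}^2)\geq\max(\chi(\mathcal{W}),\mathfrak{d})^+$-type bounds.

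If that fails, I would instead choose $\mathcal{W}=\mathcal{U}_0$ of minimal character among the $\mathcal{U}_n$, pass to a subsequence of Ramsey ultrafilters, and use that $\mathcal{W}^2$ has an uncountable fibre structure. The plan is to retain infinitely many $\mathcal{W}$-limits of distinct fibre ultrafilters extending $\mathcal{G}$: choose the $A_I$ inside a fixed set whose vertical sections remain free, making each $A_I$ a union of full columns over a set in $\mathcal{W}$. Then $[\mathcal{G}]$ contains $\mathcal{W}-\lim_k\mathcal{V}_k$ for infinitely many choices of the column ultrafilters $\mathcal{V}_k$, and so is infinite.
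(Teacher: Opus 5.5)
Your proposal has a genuine gap in Case~2, and it sits in the only situation not already covered by earlier results. Case~1 is the theorem on $Q$-points of character $>\mathfrak{d}$. If some $\mathcal{U}_n$ has character $<\mathfrak{d}$, taking it as $\mathcal{W}$ gives $\chi(\mathcal{G})\le\chi(\mathcal{W})<\mathfrak{d}\le\chi(\mathcal{W}^2)$, which is just Theorem \ref{thm:ulessd}. What remains is the case where every $\mathcal{U}_n$ has character exactly $\mathfrak{d}$ (e.g.\ under \textsf{CH}).

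There your first idea is false. For a $P$-point $\mathcal{W}$, consider the sets $\{(k,m): k\in B,\ m\in C,\ m>h(k)\}$, with $B,C$ from a basis of $\mathcal{W}$ and $h$ from a dominating family. Using the $P$-point property to get $C$ almost contained in all relevant sections, these sets form a basis of $\mathcal{W}^2$. Hence $\chi(\mathcal{W}^2)\le\max(\chi(\mathcal{W}),\mathfrak{d})=\chi(\mathcal{W})$. Moreover $\mathfrak{d}(\mathcal{W})\le\mathfrak{d}$ always, so no bound of the form $\max(\chi(\mathcal{W}),\mathfrak{d})^+$ can come out of it. So Proposition \ref{prop:banakhblasslemma6} is unavailable. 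Nothing in your construction then prevents the $\chi(\mathcal{W})$-many sets $A_I\in\mathcal{W}^2$ from generating all of $\mathcal{W}^2$, which would leave $[\mathcal{G}]$ a singleton.

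The column fallback does not repair this. $\omega^*$-discreteness of $\mathcal{W}^2$ only hands you \emph{some} $A_I\in\mathcal{W}^2$. You give no reason why $A_I$ can be a union of full columns over a set in $\mathcal{W}$, which would require $\langle f_I(\mathcal{U}_n): n\in A_I\rangle$ to be discrete along entire columns.

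The missing idea is to use that \emph{all} the $\mathcal{U}_n$ are Ramsey, not just $\mathcal{W}$:
\begin{itemize}
\item Each $f_I(\mathcal{U}_n)$ is a $P$-point, hence not in the closure of any countable set of ultrafilters not containing it.
\item The $f_I(\mathcal{U}_n)$ are pairwise distinct, so the whole sequence $\langle f_I(\mathcal{U}_n): n\in\omega\rangle$ is discrete.
\end{itemize}
With this, your $\mathcal{W}^2$-machinery becomes superfluous; in your setup one could simply take $A_I=\omega$. Pick pairwise disjoint $X_n\in f_I(\mathcal{U}_n)$ and selectors $S_n\in\mathcal{U}_n$ for $I$. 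Then $\bigcup_n S_n\cap f_I^{-1}(X_n)$ is a selector for $I$ lying in $\bigcap_n\mathcal{U}_n$. So $\mathcal{F}=\bigcap_n\mathcal{U}_n$ is already a $Q$-filter, and $[\mathcal{F}]$ contains every $\mathcal{U}_n$, hence is infinite. This is exactly the paper's proof.
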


\begin{proof}
Let $\langle \mathcal{R}_n: n \in \omega\rangle$ be a sequence of pairwise non-isomorphic Ramsey ultrafilters and define $\mathcal{F}:=\bigcap_{n \in \omega}\mathcal{R}_n$. It suffices to verify that $\mathcal{F}$ contains a selector for any interval partition $I$. Let $f$ be the finite-to-one function that is constantly equal to $n$ on the $n$'th interval of $I$. Since each $f(\mathcal{R}_n)$ is a $P$-point, it is not a limit point of any countable set of ultrafilters. It follows that the sequence $\langle f(\mathcal{R}_n): n \in \omega\rangle$ is discrete, and we find sets $X_n \in f(\mathcal{R}_n)$ that are pairwise disjoint. Let $S_n\in \mathcal{R}_n$ be a selector for $I$. Now, the set $\bigcup_{n \in \omega}S_n \cap f^{-1}(X_n)$ is a selector for $I$ and an element of $\mathcal{F}$.
\end{proof}

In the following two subsections, which are slightly tangential to the rest of the paper, we show how the results above strengthen recent results in the literature. 

\subsection{$Q$-filters and non-atomic $Q$-measures}

We remark that the existence of a $Q$-filter $\mathcal{F}$ that extends to infinitely many ultrafilters is equivalent to the existence of a non-atomic $Q$-measure. This improves a result of Borodulin--Nadzieja, Mart{\'\i}nez-Celis, Morawski and {\'S}wierczy{\'n}ska \cite{bdmcms-measures}, who showed one direction of this equivalence (Proposition 4.3) and proved that the existence of infinitely many Ramsey ultrafilters implies the existence of a non-atomic $Q$-measure (Theorem 4.29). However, all the ideas are already in that paper.

\begin{defn}
Let $\mu$ be a finitely additive probabilty measure on $\omega$.
\begin{itemize}
\item $\mu$ is \emph{non-atomic} if for every $\varepsilon>0$, there exists $n \in \omega$ and a partition $\{P_i: i \in n\}$ of $\omega$ such that $\forall i \in n: \mu(P_i)<\varepsilon$.
\item $\mu$ is a \emph{$Q$-measure} if for every interval partition $I$ of $\omega$, there exists a selector $A \subseteq \omega$ for $I$ with $\mu(A)=1$.
\end{itemize}
\end{defn}

\begin{prop}
There is a non-atomic $Q$-measure if and only if there is a $Q$-filter $\mathcal{F}$ such that $[\mathcal{F}]$ is infinite.
\end{prop}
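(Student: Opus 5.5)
Both directions go through the filter $\mathcal{F}_\mu := \{A\subseteq\omega : \mu(A)=1\}$ in one direction and through an averaging construction in the other.

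\emph{From a measure to a filter.} Let $\mu$ be a non-atomic $Q$-measure and set $\mathcal{F}:=\mathcal{F}_\mu$. Since $\mu$ is finitely additive, $\mathcal{F}$ is a filter. It contains no finite sets: non-atomicity forces $\mu(\{k\})=0$ for every $k$, so finite sets have measure $0$. By the $Q$-measure property, for every interval partition $I$ there is a selector $A$ with $\mu(A)=1$, so $A\in\mathcal{F}$ and $\mathcal{F}$ is a $Q$-filter. It remains to show that $[\mathcal{F}]$ is infinite. Suppose instead that $[\mathcal{F}]=\{\mathcal{U}_0,\ldots,\mathcal{U}_{k-1}\}$. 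Choose pairwise disjoint $B_i\in\mathcal{U}_i$ with $\bigcup_i B_i=\omega$. Since $\bigcap_i \mathcal{U}_i$ is the filter generated by $\mathcal{F}$ together with the compactness of $[\mathcal{F}]$, we get $\bigcap_i\mathcal{U}_i=\mathcal{F}$ (every $X\subseteq\omega$ meeting all sets of $\mathcal{F}$ lies in some extension). Now take a partition $\{P_j:j<m\}$ with each $\mu(P_j)<1/k$. For each $i$, some $P_{j_i}$ lies in $\mathcal{U}_i$, so $\bigcup_{i<k}P_{j_i}\in\bigcap_i\mathcal{U}_i=\mathcal{F}$, i.e.\ this union has measure $1$. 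But it is a union of at most $k$ sets each of measure $<1/k$, so its measure is $<1$, a contradiction. Hence $[\mathcal{F}]$ is infinite.

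\emph{From a filter to a measure.} Suppose $\mathcal{F}$ is a $Q$-filter with $[\mathcal{F}]$ infinite. As in the proof of Proposition~\ref{prop:pospisil}, pick pairwise disjoint $A_i$ ($i\in\omega$), each meeting every member of $\mathcal{F}$ in an infinite set. By the same independent-family argument, $[\mathcal{F}]$ contains a copy of the Cantor space $2^\omega$. Concretely, use the sets $\bigcup_{i\in X}A_i$ for $X$ ranging over a Boolean algebra of clopen subsets of $2^\omega$ coded on $\omega$: identify $\omega$ with the nodes $2^{<\omega}$ of the binary tree, and let $C_s$ be the union of the $A_i$ with $i$ extending $s$. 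Then $\mathcal{F}\cup\{C_{x\restriction n}:n\in\omega\}$ has the finite intersection property for each branch $x$.

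Let $\lambda$ be the Haar (coin-flipping) measure on $2^\omega$. For each $x\in 2^\omega$, pick, in a uniform way, an ultrafilter $\mathcal{U}_x\in[\mathcal{F}]$ containing all $C_{x\restriction n}$. Define
\[\mu(A):=\int \chi_A(\mathcal{U}_x)\,d\lambda(x),\]
taking the integral as a finitely additive mean (for instance, as a Banach limit of averages over levels $2^n$) to avoid measurability issues. Then $\mu$ is a finitely additive probability measure. Every $A\in\mathcal{F}$ lies in all $\mathcal{U}_x$, so $\mu(A)=1$, and hence $\mu$ is a $Q$-measure because $\mathcal{F}$ is a $Q$-filter. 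Non-atomicity comes from the sets $C_s$ for $|s|=n$: they have measure $2^{-n}$, and together with $\omega\setminus\bigcup_i A_i$, which has measure $0$, they partition $\omega$.

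\emph{Main obstacle.} The hard step is making $\mu$ well defined and additive without measurability. A simpler route is to work level by level. Fix a nonprincipal ultrafilter $\mathcal{V}$ on $\omega$, and for each $s$ with $|s|=n$ choose an ultrafilter $\mathcal{U}_s\in[\mathcal{F}]$ containing $C_s$. Define
\[\mu(A):=\mathcal{V}\text{-}\lim_n\, 2^{-n}\,\bigl|\{s\in 2^n: A\in\mathcal{U}_s\}\bigr|.\]
This is a limit of finitely additive probability measures, so it is finitely additive. It gives measure $1$ to every member of $\mathcal{F}$. For the sets $C_t$, note that $\mathcal{U}_s\ni C_s\subseteq C_t$ whenever $s$ extends $t$, so $C_t$ has measure at least $2^{-|t|}$. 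Since the sets $C_t$ with $|t|=m$ are disjoint and each has measure $\geq 2^{-m}$, there are $2^m$ of them and the total is at most $1$, so each has measure exactly $2^{-m}$. This gives non-atomicity and avoids all measurability concerns.
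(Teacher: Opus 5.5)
Your proof is correct. The construction you finally settle on, the level-by-level $\mathcal{V}$-limit, is essentially the paper's argument with different bookkeeping, and for the other direction you supply a proof the paper only cites.

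The paper takes a discrete sequence $\langle\mathcal{U}_n : n\in\omega\rangle$ in $[\mathcal{F}]$ and lets $\mu$ be a weak$^{\ast}$ cluster point of the Ces\`aro averages $\frac1k\sum_{i<k}\delta_{\mathcal{U}_i}$. It witnesses non-atomicity by the residue classes $P_i=\bigcup\{X_n : n\equiv i \bmod m\}$, built from disjoint sets $X_n\in\mathcal{U}_n$. You instead average point masses over the levels of a dyadic tree of ultrafilters $\mathcal{U}_s\ni C_s$, and the cones $C_t$ with $|t|=m$ play the role of the residue classes. In both versions every member of $\mathcal{F}$ gets measure $1$ for free, because all the point masses sit in $[\mathcal{F}]$.

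The paper's indexing is leaner: a single sequence and no tree. Yours computes $\mu(C_t)=2^{-m}$ exactly and explicitly treats the remainder $\omega\setminus\bigcup_{|t|=m}C_t$ as a null set, a detail the paper's write-up leaves implicit.

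For the direction from a non-atomic $Q$-measure to a $Q$-filter, the paper only cites Proposition~4.3 of Borodulin--Nadzieja et al. Your argument is a correct self-contained proof: finitely many extensions force $\mathcal{F}=\bigcap_{i<k}\mathcal{U}_i$, so a partition into pieces of measure $<1/k$ produces a member of $\mathcal{F}$ of measure $<1$.

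Two statements in your discarded first sketch are false, though the final argument uses neither.
\begin{enumerate}
\item $[\mathcal{F}]$ never contains a copy of $2^\omega$. By Proposition~\ref{prop:pospisil}, every infinite closed subset of $\omega^{\ast}$ has size $2^{\mathfrak{c}}$, so $\omega^{\ast}$ has no nontrivial convergent sequences. What the sets $C_s$ actually give is a continuous map from a closed subset of $[\mathcal{F}]$ onto $2^\omega$.
\item $\{C_s : |s|=n\}$ together with $\omega\setminus\bigcup_i A_i$ is not a partition of $\omega$, since it misses the $A_u$ with $|u|<n$. The correct extra piece is $\omega\setminus\bigcup_{|s|=n}C_s$, as in your level-by-level version.
\end{enumerate}
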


\begin{proof}
If $\mu$ is a non-atomic $Q$-measure, then $\mathcal{F}=\{A \subseteq \omega: \mu(A)=1\}$ is as required (\cite{bdmcms-measures}, Proposition 4.3).

For the other direction, assume that $\mathcal{F}$ is a $Q$-filter such that $[\mathcal{F}]$ is infinite. Let $\langle \mathcal{U}_n: n \in \omega \rangle$ be a discrete sequence in $[\mathcal{F}]$ and let $\mu$ be a measure such that, for any $A \subseteq \omega$, $\mu(A)$ is a limit point of the set
\[\left\{\frac{1}{k}\;(\delta_{\mathcal{U}_0}(A)+\delta_{\mathcal{U}_1}(A)+ \ldots + \delta_{\mathcal{U}_{k-1}}(A)): k \in \omega \setminus \{0\}\right\} \subseteq [0,1],\]
where $\delta_{\mathcal{U}_i}$ is the $0{-}1$ measure induced by $\mathcal{U}_i$. In other words, we choose $\mu$ in the weak$^{\ast}$ closure of the set of measure $\{1/k \;(\delta_{\mathcal{U}_0}+\delta_{\mathcal{U}_1}+ \ldots + \delta_{\mathcal{U}_{k-1}}): k \in \omega \setminus \{0\}\}$. Note that any $A \in \mathcal{F}$ has measure $1$.

It is clear that $\mu$ is a $Q$-measure. To see that it is non-atomic, let $m \in \omega \setminus \{0\}$. Since $\langle \mathcal{U}_n: n \in \omega \rangle$ is discrete, we find pairwise disjoint sets $X_n,\, n \in \omega$, with $X_n \in \mathcal{U}_n$. For $i<m$, let $P_i:=\bigcup \{X_n: n \equiv i \operatorname{mod} m\}$. Now, $\{P_i: i <m\}$ is a partition of $\omega$ and $\forall i <m: \mu(P_i)<1/m$.
\end{proof}

\begin{cor}
Suppose one of the assumptions of (1), (3), (4), or (5) of the Main Theorem, or of (6) holds. If there are infinitely many $Q$-points, then there is a non-atomic $Q$-measure.
\end{cor}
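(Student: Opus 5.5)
The plan is to reduce the corollary to the two results established just above. The previous proposition says that a non-atomic $Q$-measure exists as soon as there is a $Q$-filter $\mathcal{F}$ with $[\mathcal{F}]$ infinite. So it suffices to show that, under each listed hypothesis together with the existence of infinitely many $Q$-points, such a $Q$-filter exists. I would go through the hypotheses one at a time.

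\emph{Item (1).} Assume $\mathfrak{u}>\mathfrak{d}$ and that some $Q$-point exists. The corollary to the character theorem gives the required $Q$-filter: every ultrafilter, in particular our $Q$-point, has character $\geq\mathfrak{u}>\mathfrak{d}$.

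\emph{Item (3).} Assume $\mathfrak{u}<\mathfrak{d}$ and that there are infinitely many $Q$-points. Theorem \ref{thm:ulessd} applies directly. Its hypothesis of infinitely many \emph{pairwise non-isomorphic} $Q$-points is automatic, since the paper counts $Q$-points up to isomorphism.

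\emph{Item (4).} Assume there is an $\omega^*$-discrete ultrafilter of character $>\mathfrak{d}$ and infinitely many $Q$-points. The proposition following Theorem \ref{thm:ulessd} applies. That proof needs a discrete sequence of pairwise non-isomorphic $Q$-points; I obtain it by choosing infinitely many non-isomorphic ones and passing to a discrete subsequence, which exists because $\omega^*$ is regular.

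\emph{Item (5).} Infinitely many Ramsey ultrafilters means infinitely many pairwise non-isomorphic ones, so the last proposition applies. The extra assumption of infinitely many $Q$-points is not needed here.

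\emph{Item (6).} Assume $\operatorname{cov}(\mathcal{M})=\mathfrak{d}$. Mill\'an's theorem gives a closed set of $Q$-points of size $2^{\mathfrak{c}}$. Every nonempty closed subset of $\omega^*$ has the form $[\mathcal{F}]$ for a filter $\mathcal{F}$, so this set is $[\mathcal{F}]$ with $[\mathcal{F}]$ infinite. It remains to check that $\mathcal{F}$ is a $Q$-filter, which is the only step not immediate from earlier statements. Two routes seem available:
\begin{itemize}
\item Read Mill\'an's theorem in its precise form. The paper says the infinite closed sets of $Q$-points it constructs are of the form $[\mathcal{F}]$ with $\mathcal{F}$ a $Q$-filter, and Mill\'an's construction likely yields such a filter outright.
\item Argue directly, replacing $\mathcal{F}$ by the largest filter with $[\mathcal{F}]$ equal to the given closed set (topological closure, so $\mathcal{F}=\bigcap[\mathcal{F}]$). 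Suppose an interval partition $I$ had no selector in $\mathcal{F}$. One would then try to use compactness to find a single ultrafilter in $[\mathcal{F}]$ with no selector for $I$. This is where an argument could fail: the selectors coming from different ultrafilters need not be combinable into one set in $\mathcal{F}$.
\end{itemize}
I would rely on the first route, citing Mill\'an's statement in the strong form that gives a $Q$-filter with infinitely many extensions.

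With the $Q$-filter in hand in every case, the previous proposition yields a non-atomic $Q$-measure, which proves the corollary.
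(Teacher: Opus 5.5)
Your proposal is correct and follows the paper's implicit argument. The corollary is obtained by combining the preceding proposition with the results of the section on infinite closed sets of $Q$-points, which produce a $Q$-filter $\mathcal{F}$ with $[\mathcal{F}]$ infinite under (1), (3), (4) and (5); for (6) the paper, like you, takes Mill\'an's closed set of $Q$-points to be of the form $[\mathcal{F}]$ for a $Q$-filter $\mathcal{F}$, as announced in its introduction. Your caution about route (b) is well placed: whether an arbitrary infinite closed set of $Q$-points yields such a $Q$-filter is exactly the second open question in the paper's closing section.
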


This partially answers Question 7.4 in \cite{bdmcms-measures}, where the authors asked whether the existence of infinitely many $Q$-points implies the existence of a non-atomic $Q$-measure.

\subsection{Infinite closed sets of $Q$-points and Tukey-top $Q$-points}

Addressing a question asked by Benhamou and Dobrinen~\cite{bendob-ufsovermeasurable}, Benhamou and Wu~\cite{benhamouwu-diamond} proved that Tukey-top $Q$-points exist either after adding sufficiently many Cohen reals, or under \textsf{CH}. Later, Raghavan~\cite{raghavan2025q} proved that if $\mathfrak{d}=\aleph_1$ or if there are infinitely many Ramsey ultrafilters, then there exist $2^{\mathfrak{c}}$-many pairwise $\leq_{\operatorname{RK}}$-incomparable Tukey-top $Q$-points. In this subsection, we remark that this is true under the more general assumption that there exists an infinite closed set of $Q$-points, which follows from the existence of infinitely many $Q$-points under any of the assumptions of (1), (3), (4), or (5) of the Main Theorem, or of (6).

Recall that the Tukey order on $\omega^*$ is characterized as follows. Let $\mathcal{U}$ and $\mathcal{V}$ be ultrafilters. $\mathcal{U}$ is \emph{Tukey below} $\mathcal{V}$, denoted $\mathcal{U}\leq_{\operatorname{T}}\mathcal{V}$, if there exists a monotone\footnote{Here, monotone means $A\subseteq B\Rightarrow F(A)\subseteq F(B)$, for all $A,B\in\mathcal{V}$.} map $F: \mathcal{V}\to \mathcal{U}$ such that $\forall A \in \mathcal{U}\,\exists B \in \mathcal{V}: F(B)\subseteq A$. An ultrafilter $\mathcal{U}$ is called \emph{Tukey-top} if $\mathcal{U}$ is Tukey above every directed poset of cardinality $\leq\mathfrak{c}$. For a detailed investigation of the Tukey-types of ultrafilters on $\omega$ we refer the reader to \cite{dobrinen2011tukey} and \cite{raghavan2012cofinal}. Note in particular that $\mathcal{U}\leq_{\operatorname{RK}}\mathcal{V}\implies \mathcal{U}\leq_{\operatorname{T}}\mathcal{V}$.

\begin{fact}\label{fact:cofinalrk}
Any infinite closed subset $[\mathcal{F}]$ of $\omega^{\ast}$ is cofinal in the RK-ordering. In fact, there exists a fixed $f\in {^\omega}\omega$ such that for any ultrafilter $\mathcal{U}$, there exists $\mathcal{V}\in [\mathcal{F}]$ with $\mathcal{U}\leq_{\operatorname{RK}}\mathcal{V}$.
\end{fact}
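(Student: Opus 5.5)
The plan is to build $f$ from a discrete sequence in $[\mathcal{F}]$, so that $f$ sends the $n$'th ultrafilter of the sequence to the principal ultrafilter at $n$. Then, for an arbitrary $\mathcal{U}$, we take the $\mathcal{U}$-limit of the sequence and check that it lies in $[\mathcal{F}]$ and is mapped by $f$ to $\mathcal{U}$.

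First, since $[\mathcal{F}]$ is infinite and $\omega^{\ast}$ is regular, I extract a countably infinite discrete subset $\{\mathcal{U}_n : n\in\omega\}\subseteq[\mathcal{F}]$, exactly as in the proof of Proposition \ref{prop:pospisil}. By the characterization of discreteness recalled in the preliminaries, I fix pairwise disjoint sets $A_n\in\mathcal{U}_n$. Replacing $A_0$ by $A_0\cup(\omega\setminus\bigcup_n A_n)$ keeps $A_0\in\mathcal{U}_0$ and keeps the sets pairwise disjoint, so I may assume $\{A_n:n\in\omega\}$ partitions $\omega$. I then define $f\in{^\omega}\omega$ by $f(k)=n$ for $k\in A_n$. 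Since $A_n\in\mathcal{U}_n$, the image $f(\mathcal{U}_n)$ is the principal ultrafilter $\delta_n$ at $n$.

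Next, given any ultrafilter $\mathcal{U}$ on $\omega$, I set $\mathcal{V}:=\mathcal{U}-\lim_n\mathcal{U}_n$. I then check the following.

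\begin{enumerate}
\item $\mathcal{V}$ is non-principal: for finite $A$, the set $\{n : A\in\mathcal{U}_n\}$ is empty, because each $\mathcal{U}_n$ is non-principal.
\item $\mathcal{V}\in[\mathcal{F}]$: for $A\in\mathcal{F}$ we have $\{n: A\in\mathcal{U}_n\}=\omega\in\mathcal{U}$, so $\mathcal{F}\subseteq\mathcal{V}$.
\item $\mathcal{U}\leq_{\operatorname{RK}}\mathcal{V}$ via $f$: by the identity $f(\mathcal{V}-\lim_n\mathcal{U}_n)=\mathcal{V}-\lim_n f(\mathcal{U}_n)$ noted in the preliminaries,
\[f(\mathcal{V})=\mathcal{U}-\lim_n f(\mathcal{U}_n)=\mathcal{U}-\lim_n\delta_n=\mathcal{U}.\]
\end{enumerate}

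Since $f$ does not depend on $\mathcal{U}$, this gives the fixed $f$ in the statement, and in particular shows that $[\mathcal{F}]$ is cofinal in the RK-ordering.

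I do not expect a real obstacle here. The only points needing care are obtaining the discrete sequence, which the regularity argument already used in Proposition \ref{prop:pospisil} provides, and making the sets $A_n$ cover $\omega$ so that $f$ is total. The final limit computation is routine.
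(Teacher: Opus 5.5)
Your proof is correct and follows the paper's argument: you use the same $f$, which collapses pairwise disjoint $\mathcal{F}$-positive sets $A_n$ to the points $n$. The only differences are that you exhibit the witness explicitly as $\mathcal{V}=\mathcal{U}-\lim_n\mathcal{U}_n$, one particular ultrafilter extending $\mathcal{F}\cup\{f^{-1}(A):A\in\mathcal{U}\}$, where the paper takes an arbitrary such extension, and that you spell out making the $A_n$ cover $\omega$, which the paper leaves implicit.
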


\begin{proof}
Since $[\mathcal{F}]$ is infinite, we find pairwise disjoint sets $X_n \in [\omega]^\omega, \,n\in \omega,$ such that each $X_n$ has infinite intersection with every element of $\mathcal{F}$. Let $f\in {^\omega}\omega$ be such $X_n=f^{-1}(\{n\})$ for each $n\in \omega$. Now, for any ultrafilter $\mathcal{U}$, $f$ witnesses that every ultrafilter extending $\mathcal{F}\cup \{f^{-1}(A): A \in \mathcal{U}\}$ is RK-above $\mathcal{U}$.
\end{proof}
The following can now be obtained by the preceding fact.
\begin{prop}
Any infinite closed subset $[\mathcal{F}]$ of $\omega^{\ast}$ contains $2^{\mathfrak{c}}$-many pairwise $\leq_{RK}$-incomparable Tukey-top ultrafilters.
\end{prop}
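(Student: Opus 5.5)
Fix an infinite closed set $[\mathcal{F}]$ and the function $f\in{^\omega}\omega$ from Fact \ref{fact:cofinalrk}, with fibres $X_n=f^{-1}(\{n\})$ each meeting every member of $\mathcal{F}$ in an infinite set. The plan is to pull back Tukey-top ultrafilters along $f$, and to use the Rudin--Keisler counting argument behind Fact \ref{fact:numberofiso} to get incomparability.

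First I need a supply of Tukey-top ultrafilters on $\omega$. I will use the classical fact (Isbell; see also \cite{dobrinen2011tukey}) that there are $2^{\mathfrak{c}}$-many Tukey-top ultrafilters. A concrete source is an independent family $\{Y_\alpha:\alpha<\mathfrak{c}\}$: any ultrafilter containing the filter generated by the sets $\omega\setminus Y_\alpha$ together with the co-finite sets is Tukey-top. There are $2^{\mathfrak{c}}$ such ultrafilters, e.g.\ via a Pospíšil-style splitting argument as in Proposition \ref{prop:pospisil}. Call this set $\mathcal{T}$.

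For each $\mathcal{U}\in\mathcal{T}$, choose $\mathcal{V}_{\mathcal{U}}\in[\mathcal{F}]$ extending $\mathcal{F}\cup\{f^{-1}(A):A\in\mathcal{U}\}$. This filter is proper because each $X_n$ meets every element of $\mathcal{F}$. Then $f(\mathcal{V}_{\mathcal{U}})=\mathcal{U}$, so $\mathcal{U}\leq_{\operatorname{RK}}\mathcal{V}_{\mathcal{U}}$ and hence $\mathcal{U}\leq_{\operatorname{T}}\mathcal{V}_{\mathcal{U}}$. Since Tukey-topness is upward closed under $\leq_{\operatorname{T}}$, each $\mathcal{V}_{\mathcal{U}}$ is Tukey-top. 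Distinct $\mathcal{U}$ give distinct $\mathcal{V}_{\mathcal{U}}$, because $\mathcal{U}=f(\mathcal{V}_{\mathcal{U}})$. So $[\mathcal{F}]$ contains a set $\mathcal{C}$ of $2^{\mathfrak{c}}$ Tukey-top ultrafilters.

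It remains to extract $2^{\mathfrak{c}}$ pairwise $\leq_{\operatorname{RK}}$-incomparable members of $\mathcal{C}$. Each ultrafilter has at most $\mathfrak{c}$ ultrafilters RK-below it, since there are only $\mathfrak{c}$ functions $\omega\to\omega$. Define a graph on $\mathcal{C}$ joining two elements when they are RK-comparable. Each vertex $\mathcal{V}$ has $\leq\mathfrak{c}$ neighbours below it. The neighbours above it need not be few, so I orient each edge from the larger to the smaller vertex; the out-degree is then $\leq\mathfrak{c}$. I then build an independent set by transfinite recursion of length $2^{\mathfrak{c}}$: at stage $\xi$, pick $\mathcal{V}_\xi\in\mathcal{C}$ not RK-below any earlier $\mathcal{V}_\eta$ and not RK-above any earlier $\mathcal{V}_\eta$. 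The first condition excludes at most $|\xi|\cdot\mathfrak{c}<2^{\mathfrak{c}}$ elements, using $\mathfrak{c}<2^{\mathfrak{c}}$ and regularity considerations via $\operatorname{cf}(2^{\mathfrak{c}})>\mathfrak{c}$.

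The main obstacle is the second condition, since an ultrafilter may have $2^{\mathfrak{c}}$ RK-successors. I would handle it with a Hajnal-type free set theorem: a set mapping $F:\mathcal{C}\to[\mathcal{C}]^{\leq\mathfrak{c}}$, sending $\mathcal{V}$ to its RK-predecessors, with $\mathfrak{c}<|\mathcal{C}|$, admits a free set of full size $2^{\mathfrak{c}}$. A free set is exactly an RK-antichain. If $2^{\mathfrak{c}}$ is singular I would instead work directly: choose the $\mathcal{U}$'s from $\mathcal{T}$ so that the $f$-images are themselves RK-incomparable, and derive incomparability of the $\mathcal{V}$'s from that of their images. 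That requires a more careful choice of $f$, which is the alternative route if Hajnal's theorem is not to be quoted.
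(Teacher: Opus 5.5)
Your overall route is the paper's. You take $2^{\mathfrak{c}}$ Tukey-top ultrafilters and lift each $\mathcal{U}$ through the single function $f$ of Fact~\ref{fact:cofinalrk} to some $\mathcal{V}_{\mathcal{U}}\in[\mathcal{F}]$. These lifts are distinct since $f(\mathcal{V}_{\mathcal{U}})=\mathcal{U}$, and Tukey-top since $\leq_{\operatorname{RK}}$ implies $\leq_{\operatorname{T}}$. You then thin out with Hajnal's free set theorem for the RK-predecessor map.

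The problem is the last step as you formulate it. Hajnal's theorem needs $|F(x)|<\lambda$ for a fixed $\lambda<\kappa=|\mathcal{C}|$. With predecessor sets of size $\leq\mathfrak{c}$ it therefore applies only if $\mathfrak{c}^{+}<2^{\mathfrak{c}}$. Your version, which assumes only $\mathfrak{c}<|\mathcal{C}|$, fails when $2^{\mathfrak{c}}=\mathfrak{c}^{+}$: take $F(\alpha)=\alpha$ on the ordinal $\mathfrak{c}^{+}$.

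This is not only a matter of citation. Three facts combine:
\begin{itemize}
\item $\leq_{\operatorname{RK}}$ is $\mathfrak{c}^{+}$-directed (pull back along an independent family of $\mathfrak{c}$ functions);
\item upper bounds can be moved into $[\mathcal{F}]$ using $f$;
\item each ultrafilter has only $\mathfrak{c}$ predecessors.
\end{itemize}
Hence $[\mathcal{F}]$ contains a strictly $\leq_{\operatorname{RK}}$-increasing chain of Tukey-top ultrafilters of length $\mathfrak{c}^{+}$. Under $2^{\mathfrak{c}}=\mathfrak{c}^{+}$ this is a set of $2^{\mathfrak{c}}$ distinct Tukey-top points of $[\mathcal{F}]$ with no two incomparable. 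So ``pairwise distinct plus Hajnal'' cannot suffice in that case. You would have to build the family so that, say, each member is RK-comparable with at most $\mathfrak{c}$ others; then choosing one point per component of the comparability graph works.

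The critical case is thus $2^{\mathfrak{c}}=\mathfrak{c}^{+}$, which is a regular cardinal. A singular $2^{\mathfrak{c}}$ causes no difficulty, since Hajnal's theorem has no regularity hypothesis. In fairness, the paper's proof makes the same one-line appeal to Hajnal's theorem, citing Shelah--Rudin, Dobrinen--Todorcevic and Raghavan for similar applications. So you are at its level of detail, but you should state the theorem with its correct hypothesis and flag this case.

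Two auxiliary claims are also wrong.

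First, containing all $\omega\setminus Y_\alpha$ does not make an ultrafilter Tukey-top. Let $\{Y_\alpha:\alpha<\mathfrak{c}\}$ be an independent family of sets of even numbers; it is still independent in $\omega$. Every ultrafilter containing the odd numbers then contains all $\omega\setminus Y_\alpha$. Such an ultrafilter can be a $P$-point (e.g.\ under \textsf{CH}), and a $P$-point is never Tukey-top. Isbell's filter is generated by the $Y_\alpha$ together with all sets $\omega\setminus\bigcap_{\alpha\in X}Y_\alpha$ for $X\in[\mathfrak{c}]^{\omega}$. Since you also cite the classical theorem, your main line is unaffected.

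Second, your fallback does not work. From $\mathcal{V}_{\mathcal{U}}\leq_{\operatorname{RK}}\mathcal{V}_{\mathcal{U}'}$ you only get $\mathcal{U}\leq_{\operatorname{RK}}\mathcal{V}_{\mathcal{U}'}$. This says nothing about $\mathcal{U}$ versus $\mathcal{U}'=f(\mathcal{V}_{\mathcal{U}'})$, so RK-incomparability of the images does not transfer to the lifts.
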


\begin{proof}
By a result of Isbell~\cite{isbell1965category} and Juh\'asz~\cite{juhasz-tukey}, there are $2^{\mathfrak{c}}$-many Tukey-top ultrafilters, say $\mathcal{U}_\alpha, \alpha \in 2^\mathfrak{c}$. By the previous fact, we find $f \in {^\omega}\omega$ and for each $\alpha$ some $\mathcal{V}_\alpha \in [\mathcal{F}]$ such that $f$ witnesses $\mathcal{U}_\alpha \leq_{\operatorname{RK}}\mathcal{V}_\alpha$, and hence $\mathcal{U}_\alpha \leq_{\operatorname{T}}\mathcal{V}_\alpha$. Since $f$ is fixed, it follows that the $\mathcal{V}_\alpha$ are pairwise distinct Tukey-top ultrafilters. Finally, we find a set $\mathcal{X}\subseteq \{\mathcal{V}_{\alpha} :\alpha<2^{\mathfrak{c}}\}$, of cardinality $2^{\mathfrak{c}}$, consisting of pairwise $\leq_{\operatorname{RK}}$-incomparable ultrafilters via an application of Hajnal's free set theorem~\cite{hajnal-freeset} (see also \cite{juhasz-cardfunctionsintop}). We refer the reader to, e.g., \cite{shelahrudin, dobrinen2011tukey, raghavan2025q} for similar applications of Hajnal's theorem.
\end{proof}

\begin{rem}
Instead of using Fact \ref{fact:cofinalrk}, we could have alternatively applied the classical construction of Isbell and Juh\'asz of a Tukey-top ultrafilter using an independent family to the pairwise disjoint sets $\{X_n: n\in\omega\}$ from the proof of Fact \ref{fact:cofinalrk}.
\end{rem}
\begin{cor}
Suppose one of the assumptions of (1), (3), (4), or (5) of the Main Theorem, or of (6) holds. If there are infinitely many $Q$-points, then there are $2^{\mathfrak{c}}$-many pairwise $\leq_{\operatorname{RK}}$-incomparable Tukey-top $Q$-points. 
\end{cor}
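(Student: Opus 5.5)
The corollary should follow by chaining the results of Section 4 with the preceding proposition on infinite closed sets. Concretely, I will first show that each listed hypothesis, together with the existence of infinitely many $Q$-points, yields a $Q$-filter $\mathcal{F}$ with $[\mathcal{F}]$ infinite.

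The cases are as follows.
\begin{itemize}
\item Under (1), i.e.\ $\mathfrak{u}>\mathfrak{d}$, any $Q$-point has character $\geq\mathfrak{u}>\mathfrak{d}$, so the corollary following the theorem on $Q$-points of character $>\mathfrak{d}$ applies.
\item Under (3), Theorem \ref{thm:ulessd} applies directly.
\item Under (4), the proposition on $\omega^{\ast}$-discrete ultrafilters of character $>\mathfrak{d}$ applies. It needs a discrete sequence of pairwise non-isomorphic $Q$-points. Since infinitely many isomorphism classes exist, pick representatives and extract a discrete subsequence, using that every infinite subset of $\omega^{\ast}$ has an infinite discrete subset.
\item Under (5), I read the hypothesis as infinitely many Ramsey ultrafilters up to isomorphism, which is how the paper counts, and apply the proposition on Ramsey ultrafilters.
\item Under (6), i.e.\ $\operatorname{cov}(\mathcal{M})=\mathfrak{d}$, I invoke Mill\'an's theorem. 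As stated, that theorem only asserts a closed set of $Q$-points of size $2^{\mathfrak{c}}$. In the paper's convention such sets are $[\mathcal{F}]$ for a $Q$-filter $\mathcal{F}$, as remarked after the statement of (6).
\end{itemize}

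Next, I show that $[\mathcal{F}]$ consists of $Q$-points. Any ultrafilter extending the $Q$-filter $\mathcal{F}$ contains, for every interval partition, the selector already present in $\mathcal{F}$, so it is a $Q$-point. Hence $[\mathcal{F}]$ is an infinite closed set of $Q$-points.

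Finally, I apply the preceding proposition to $[\mathcal{F}]$. It yields $2^{\mathfrak{c}}$-many pairwise $\leq_{\operatorname{RK}}$-incomparable Tukey-top ultrafilters inside $[\mathcal{F}]$, and by the previous step these are $Q$-points. Pairwise RK-incomparability also gives pairwise non-isomorphism, so the count is up to isomorphism as required.

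The main obstacle is bookkeeping rather than mathematics. For (6), one must extract a $Q$-filter with infinitely many extensions from Mill\'an's statement rather than merely a closed set of size $2^{\mathfrak{c}}$. A general closed set of $Q$-points $[\mathcal{F}]$ need not have $\mathcal{F}$ itself be a $Q$-filter, but for the final step any infinite closed set of $Q$-points suffices. The preceding proposition only uses that $[\mathcal{F}]$ is infinite and closed, and all its members are then $Q$-points by assumption. So the corollary goes through with whatever closed set Mill\'an provides, and the same remark covers every case uniformly.
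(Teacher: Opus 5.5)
Your proposal is correct and follows the paper's own route. Under (1), (3), (4) and (5), the results of Section 4 give a $Q$-filter $\mathcal{F}$ with $[\mathcal{F}]$ infinite, and under (6) Mill\'an's theorem gives an infinite closed set of $Q$-points. The proposition on infinite closed sets then yields $2^{\mathfrak{c}}$ pairwise $\leq_{\operatorname{RK}}$-incomparable Tukey-top members, all of which are $Q$-points. Your remark that any infinite closed set of $Q$-points suffices, so that no $Q$-filter is needed in case (6), matches how the paper itself frames the hypothesis of that subsection.
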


\section{The number of $Q$-points under $\mathfrak{u}=\mathfrak{d}$}

It remains to prove item (2) of the Main Theorem. By Corollary \ref{cor:pigeonhole}, it suffices to show that under $\mathfrak{u}=\mathfrak{d}$, the existence of infinitely many $Q$-points implies the existence of at least $2^{\mathfrak{d}}$-many $Q$-points. Unlike in the previous proofs, we will have to make sure that these $Q$-points we construct are pairwise non-isomorphic, since we cannot appeal to Fact \ref{fact:numberofiso}.

\begin{thm}\label{thm:ugeqd}
Assume that $\mathfrak{u}\geq \mathfrak{d}$ and that there are infinitely many $Q$-points. Then the number of $Q$-points is at least $2^{\mathfrak{d}}$.
\end{thm}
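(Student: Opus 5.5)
Fix a discrete sequence $\langle \mathcal{U}_n: n\in\omega\rangle$ of pairwise non-isomorphic $Q$-points and set $\mathcal{H}:=\bigcap_n \mathcal{U}_n$. The plan is to follow the proof of Theorem \ref{thm:ulessd}, but to build $2^{\mathfrak{d}}$ limits $\mathcal{V}-\lim_n\mathcal{U}_n$ directly rather than a single $Q$-filter with infinitely many extensions.

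\textbf{Setup.} Since $\mathcal{H}$ is non-meager, Lemma \ref{lem:q-test}(ii) gives a $Q$-filter-test family $\mathcal{I}=\{I_\alpha:\alpha<\mathfrak{d}(\mathcal{H})\}$ over $\mathcal{H}$, with $\mathfrak{d}(\mathcal{H})\le\mathfrak{d}$. For each $\alpha$, let $f_\alpha$ collapse the intervals of $I_\alpha$. The ultrafilters $f_\alpha(\mathcal{U}_n)$ are pairwise distinct, because distinct $Q$-points are not nearly coherent.

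\textbf{Goal.} It suffices to find $2^{\mathfrak{d}}$ ultrafilters $\mathcal{V}$ on $\omega$ such that:
\begin{enumerate}[label=(\alph*)]
\item for each $\alpha$, $\mathcal{V}$ contains a set $A$ with $\langle f_\alpha(\mathcal{U}_n):n\in A\rangle$ discrete;
\item the resulting limits $\mathcal{V}-\lim_n\mathcal{U}_n$ are pairwise non-isomorphic.
\end{enumerate}
Given (a), the selector argument at the end of the proof of Theorem \ref{thm:ulessd} shows that $\mathcal{V}-\lim_n\mathcal{U}_n$ contains a selector for each $I_\alpha$. It extends $\mathcal{H}$, so it is a $Q$-point. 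By Lemma \ref{lem:rudin} (using a discreteness witness for $\langle\mathcal{U}_n\rangle$), distinct $\mathcal{V}$ give distinct limits.

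\textbf{Choosing the $\mathcal{V}$.} Each $\mathcal{V}$ will extend the filter generated by sets $A_\alpha$, $\alpha<\mathfrak{d}$, chosen so that the filter $\mathcal{G}$ they generate has at least $2^{\mathfrak{d}}$ extensions. I would build a tree of filters of height $\mathfrak{d}$ that splits at every stage, an independent-family style construction along $2^{<\mathfrak{d}}$.

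At node $s$, the filter $\mathcal{G}_s$ has fewer than $\mathfrak{d}\le\mathfrak{u}$ generators. Hence $\mathcal{G}_s$ is not an ultrafilter and has infinitely many (so $2^{\mathfrak{c}}$, by Proposition \ref{prop:pospisil}) extensions. Among these are $P$-points, or ultrafilters of the form $\mathcal{W}^2$; the relevant requirement is that they be $\omega^*$-discrete. Such an ultrafilter supplies the discreteness set $A_\alpha$, as in Theorem \ref{thm:ulessd}. Splitting is done by choosing $B$ with both $B$ and its complement positive. At limit levels $\gamma<\mathfrak{d}$ the filter stays $<\mathfrak{u}$-generated, so the construction continues. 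Each branch $b\in 2^{\mathfrak{d}}$ then yields a filter $\mathcal{G}_b$ containing an $A_\alpha$ for every $\alpha$, and distinct branches are separated by a splitting set; take any ultrafilter $\mathcal{V}_b\supseteq\mathcal{G}_b$.

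\textbf{Main obstacle.} The hardest step is securing, at each stage, an $\omega^*$-discrete ultrafilter extending a $<\mathfrak{u}$-generated filter. If $\omega^*$-discrete ultrafilters are not available, I would replace this by a direct argument. Since $\langle f_\alpha(\mathcal{U}_n)\rangle$ consists of distinct points, any $\mathcal{G}_s$-positive set contains an infinite subset on which the sequence is discrete (an infinite subset of a regular space has an infinite discrete subset). The aim is to refine so that this subset stays positive with respect to the earlier choices, and to carry out this positivity bookkeeping using $\chi<\mathfrak{u}$.

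\textbf{Non-isomorphism.} Fact \ref{fact:numberofiso} is unavailable here. Instead, each isomorphism class has at most $\mathfrak{c}$ members, so $2^{\mathfrak{d}}$ distinct $Q$-points span $2^{\mathfrak{d}}$ classes whenever $2^{\mathfrak{d}}>\mathfrak{c}$. This holds because $\mathfrak{d}\ge\operatorname{cf}\mathfrak{c}$ is not guaranteed, but $2^{\mathfrak{d}}\ge 2^{\mathfrak{c}}$ fails only if $\mathfrak{d}<\mathfrak{c}$. In the case $2^{\mathfrak{d}}=\mathfrak{c}$, I would use a Hajnal free-set argument, as in the Tukey subsection, to thin the family out to pairwise non-isomorphic members.
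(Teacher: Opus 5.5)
Your proposal has two genuine gaps: the positivity step and non-isomorphism. The rest of the outline matches the paper: the tree of filters, splitting via $\chi<\mathfrak{u}$, the selector argument making each limit a $Q$-point, and Rudin's lemma for distinctness.

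\textbf{The positivity step.} At each node you need a set $A_\alpha$ that is $\mathcal{G}_s$-positive and on which $\langle f_\alpha(\mathcal{U}_n)\rangle$ is discrete. Your main route assumes some extension of $\mathcal{G}_s$ is a $P$-point or a $\mathcal{W}^2$, hence $\omega^*$-discrete. This is unsupported: the hypotheses provide no $P$-points at all, let alone ones extending a prescribed $\mathcal{G}_s$. Your fallback only restates the problem, since an infinite discrete subsequence need not be $\mathcal{G}_s$-positive, and $\chi<\mathfrak{u}$ is not the hypothesis that fixes this.

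The missing idea is Ketonen's lemma, which uses $\chi(\mathcal{G}_s)<\mathfrak{d}$; this is the paper's Lemma \ref{lem:ketonen}. Take any ultrafilter $\mathcal{V}\supseteq\mathcal{G}_s$ and let $\mathcal{W}=\mathcal{V}-\lim_n f_\alpha(\mathcal{U}_n)$. For every $m$ with $f_\alpha(\mathcal{U}_m)\neq\mathcal{W}$ (all but at most one), pick $X_m\in f_\alpha(\mathcal{U}_m)$ with $\omega\setminus X_m\in\mathcal{W}$. Then $A_m:=\{n:\omega\setminus X_m\in f_\alpha(\mathcal{U}_n)\}\in\mathcal{V}$. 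Ketonen's lemma gives a $\mathcal{G}_s$-positive pseudo-intersection $A$ of the $A_m$. On $A$ the sequence is discrete \emph{and has at most one limit point among all the $f_\alpha(\mathcal{U}_n)$}; this extra clause is needed below.

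\textbf{Non-isomorphism.} Your counting argument works only when $2^{\mathfrak{d}}>\mathfrak{c}$. The case $2^{\mathfrak{d}}=\mathfrak{c}$ is consistent with the hypotheses, e.g.\ $\mathfrak{u}=\mathfrak{d}=\aleph_1$ and $2^{\aleph_1}=\mathfrak{c}$. There Hajnal's free set theorem is unavailable: the set mapping would send each point to its isomorphism class, of size $\mathfrak{c}$, inside a family of size $\mathfrak{c}$. Indeed nothing in your construction prevents all the limits from landing in a single isomorphism class, which has exactly $\mathfrak{c}$ members.

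The paper avoids counting. It adds a near-coherence test family $\mathcal{T}$ of size $\mathfrak{d}$ (over the Fr\'echet filter) to the list of functions $f_\alpha$ handled in the construction. For branches $\varphi\neq\psi$ and any $\alpha$, let $A\in\mathcal{V}_\varphi$ and $B\in\mathcal{V}_\psi$ be the stage-$\alpha$ discreteness sets. After deleting at most one point from each, $\langle f_\alpha(\mathcal{U}_n):n\in A\cup B\rangle$ is discrete. This is exactly where the at-most-one-limit-point clause is essential; your condition (a) alone does not make the union discrete. Since $A\cup B\in\mathcal{V}_\varphi\cap\mathcal{V}_\psi$, Lemma \ref{lem:rudin} gives $f_\alpha(\mathcal{V}_\varphi-\lim_n\mathcal{U}_n)\neq f_\alpha(\mathcal{V}_\psi-\lim_n\mathcal{U}_n)$. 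Hence no $f\in\mathcal{T}$ witnesses near coherence, and the two $Q$-points are non-isomorphic.
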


We need the following lemma, which is a consequence of Lemma 7 of Banakh and Blass \cite{banakh2006number}. We give a proof for convenience.

\begin{lem}\label{lem:ketonen}
Let $\{ \mathcal{U}_n: n \in \omega \}$ be a set of pairwise non-nearly coherent ultrafilters and $\mathcal{F}$ a filter with $\chi(\mathcal{F})<\mathfrak{d}$. There exists $A \in [\omega]^\omega$ intersecting every element of $\mathcal{F}$ such that $\{\mathcal{U}_n: n \in A\}$ is discrete, with at most one limit point in $\{\mathcal{U}_n: n \in \omega\}$.
\end{lem}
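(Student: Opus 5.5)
The plan is to reduce the lemma to a diagonalization against a basis of $\mathcal{F}$, using $\chi(\mathcal{F})<\mathfrak{d}$ exactly once to make the chosen index set $\mathcal{F}$-positive. Throughout, fix a basis $\mathcal{B}$ of $\mathcal{F}$ with $|\mathcal{B}|<\mathfrak{d}$. The infinite set $A$ is to meet every $B\in\mathcal{B}$, which is the same as meeting every element of $\mathcal{F}$. If $\mathcal{F}$ contains a finite set there is little to do, so I assume $\mathcal{F}$ is free.

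\emph{Step 1 (one function separating all the $\mathcal{U}_n$).} For each pair $n\neq m$, non-near-coherence of $\mathcal{U}_n,\mathcal{U}_m$ gives, by Blass's characterization, an interval partition $I^{n,m}$ such that $\mathcal{U}_n$ contains the union of the even intervals of $I^{n,m}$ and $\mathcal{U}_m$ contains the union of the odd ones. There are only countably many such partitions. Choose one interval partition $J$ such that, for each pair, all but finitely many intervals of $J$ contain an interval of $I^{n,m}$. Then the usual argument with the two coarsenings of $J$ obtained by merging consecutive blocks in the two possible parities gives finite-to-one maps $h_0,h_1$. For each pair, one of them separates $\mathcal{U}_n$ from $\mathcal{U}_m$. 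Pairing them into one finite-to-one map $h$, the ultrafilters $\mathcal{W}_n:=h(\mathcal{U}_n)$ are pairwise distinct. It suffices to find $A$ with $\{\mathcal{W}_n:n\in A\}$ discrete and having at most one limit point among the $\mathcal{W}_n$, since $h$ is continuous and these properties pull back.

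\emph{Step 2 (finitary approximations).} Using the fact that $h$ is finite-to-one, I will code the separation of the $\mathcal{W}_n$ by a function. Choose sets $Z_{n,m}\in\mathcal{W}_n\setminus\mathcal{W}_m$ for $n\neq m$, and put $Y_n:=\bigcap_{m<n}Z_{n,m}\in\mathcal{W}_n$. For $k\in\omega$, let $\varphi(k)$ be a bound large enough that the finitely many sets $Y_j$, $j\le k$, are already pairwise distinguished in the relevant sense below level $\varphi(k)$. The aim is a function $g$ and a family of blocks $[a_i,a_{i+1})$ of indices with the following property. If $A$ contains at most one index from each block, and the blocks used are sparse enough relative to $g$, then shrinking the $Y_n$, $n\in A$, by finite modifications makes them pairwise disjoint. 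This is discreteness. Any limit point among the $\mathcal{W}_n$ then has to be a single distinguished index, namely the one whose $Y$-set cannot be separated from the tail.

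\emph{Step 3 (the key use of $\chi(\mathcal{F})<\mathfrak{d}$).} For each $B\in\mathcal{B}$, let $e_B$ be its increasing enumeration. Define $\psi_B\in{^\omega}\omega$ by letting $\psi_B(k)$ be the next element of $B$ beyond the "safe distance" required by $\varphi$ from $k$. Since $|\mathcal{B}|<\mathfrak{d}$, the family $\{\psi_B:B\in\mathcal{B}\}\cup\{\varphi\}$ is not dominating, so there is $f$ not dominated by any of these functions. I take the interval partition $K$ determined by iterating $f$. By the choice of $f$, each $B$ has infinitely many points that are isolated in the required way relative to $K$. I then choose $A$ with at most one point per $K$-interval, using only every other interval so that chosen indices are far apart. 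This $A$ is to meet every $B$ while still satisfying the sparseness from Step 2.

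I expect Step 3 to be the main obstacle. The difficulty is choosing $A$ simultaneously for all $B\in\mathcal{B}$, with fewer than $\mathfrak{d}$ constraints and one global sparse selection. The approach I would try first follows Ketonen's argument that a filter of character $<\mathfrak{d}$ cannot be forced onto a fixed interval partition. If that stalls, I would instead run a recursion of length $\omega$, interleaving the choices, and argue by the non-domination of $f$ that every $B$ is hit infinitely often. The clause "at most one limit point" should come out of this bookkeeping, since only the first index can fail to be separated from the tail.
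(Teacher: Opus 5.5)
There is a genuine gap, and it sits in Step 2, before the obstacle you flag in Step 3.

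Your plan assumes that two things are governed by how sparse the index set $A$ is: whether $\{\mathcal{W}_n:n\in A\}$ is discrete, and where its limit points lie. Sparseness here is measured against a bound $\varphi$ read off from finitely many $Y_j$ at a time. Neither property is governed this way. $\mathcal{W}_n$ is a limit point of $\{\mathcal{W}_m:m\in A\}$ iff $A$ meets every set $\{m\neq n: X\in\mathcal{W}_m\}$ with $X\in\mathcal{W}_n$. That is a condition about a filter on the index set, and growth rates cannot see it.

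For a counterexample (your Step 2 only uses that the $\mathcal{W}_n$ are distinct), let $\{\mathcal{W}_m:m\ge 2\}$ be discrete and put $\mathcal{W}_0:=\mathcal{V}-\lim_m\mathcal{W}_m$ and $\mathcal{W}_1:=\mathcal{V}'-\lim_m\mathcal{W}_m$. Here $\mathcal{V},\mathcal{V}'$ are non-isomorphic $Q$-points, which exist e.g.\ under \textsf{CH}. Take any interval partition with block map $f$; then $f(\mathcal{V})\neq f(\mathcal{V}')$ by non-near-coherence. Gluing a selector in $\mathcal{V}$ and one in $\mathcal{V}'$ along disjoint members of $f(\mathcal{V})$ and $f(\mathcal{V}')$ gives a selector in $\mathcal{V}\cap\mathcal{V}'$. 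Yet every $A\subseteq\omega\setminus 2$ in $\mathcal{V}\cap\mathcal{V}'$, however sparse, has both $\mathcal{W}_0$ and $\mathcal{W}_1$ as limit points. So your claim that only one distinguished index can fail to be separated from the tail has no support. The sets $Y_n=\bigcap_{m<n}Z_{n,m}$ separate $\mathcal{W}_n$ only from earlier terms, and nothing makes them almost disjoint.

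Step 3 has two further problems.
\begin{itemize}
\item Making $A$ a sparse partial selector can clash with $\mathcal{F}$-positivity. If $\mathfrak{u}<\mathfrak{d}$, $\mathcal{F}$ may be an ultrafilter of character $<\mathfrak{d}$. Such an ultrafilter is not rapid, since a basis would yield a dominating family of enumerations. Its members, which are exactly the $\mathcal{F}$-positive sets, therefore cannot be made arbitrarily sparse.
\item You give no mechanism by which one choice of points from the $K$-intervals meets all (possibly uncountably many) $B\in\mathcal{B}$.
\end{itemize}
Step 1 is harmless but unnecessary: only pairwise distinctness of the $\mathcal{U}_n$ is used, so $h=\mathrm{id}$ would do.

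The missing idea is to let an ultrafilter on the index set do the separating. Fix an ultrafilter $\mathcal{V}\supseteq\mathcal{F}$. The ultrafilter $\mathcal{V}-\lim_n\mathcal{U}_n$ equals at most one $\mathcal{U}_m$. For every other $m$, choose $X_m\in\mathcal{U}_m$ with $\omega\setminus X_m\in\mathcal{V}-\lim_n\mathcal{U}_n$, so that $A_m:=\{n: X_m\notin\mathcal{U}_n\}\in\mathcal{V}$.

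The hypothesis $\chi(\mathcal{F})<\mathfrak{d}$ enters through Ketonen's lemma: these countably many members of an ultrafilter extending $\mathcal{F}$ have a pseudo-intersection $A$ that meets every member of $\mathcal{F}$. Concretely, let $C_k$ be the intersection of the first $k+1$ sets $A_m$, and let $\mathcal{B}$ be a basis of size $<\mathfrak{d}$. Take $h$ not dominated by any of the functions $k\mapsto\min(B\cap C_k\setminus k)$ for $B\in\mathcal{B}$, and let $A:=\bigcup_k \bigl(C_k\cap[0,h(k))\bigr)$. So non-domination decides how large an initial piece of each $C_k$ to keep; it is not used to make $A$ sparse.

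For each good $m$, the clopen set determined by $X_m$ contains $\mathcal{U}_n$ for only finitely many $n\in A$, so $\mathcal{U}_m$ is not a limit point of $\{\mathcal{U}_n:n\in A\}$. Deleting the possible bad index from $A$ gives discreteness with at most one limit point.
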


\begin{proof}
Let $\mathcal{V}$ be any ultrafilter extending $\mathcal{F}$. For all but at most one $m \in \omega$, we have $\mathcal{V}-\lim_n \mathcal{U}_n \neq \mathcal{U}_m$, and hence find $X_m \in \mathcal{U}_m$ such that $\omega \setminus X_m \in \mathcal{V}-\lim_n \mathcal{U}_n$. Let $A_m := \{n \in \omega: \omega \setminus X_m \in \mathcal{U}_n\}\in \mathcal{V}$. By a lemma due to Ketonen \cite{ketonen1976existence}, using the assumption $\chi(\mathcal{F})<\mathfrak{d}$, there exists a pseudo-intersection $A$ of the $A_m$ that intersects every element of $\mathcal{F}$.

Now, for each $m$ as above, all but finitely many of the $\mathcal{U}_n$ for $n \in A$ do not contain $X_m \in \mathcal{U}_m$, which shows that $\mathcal{U}_m$ is not a limit point of $\{\mathcal{U}_n: n \in A\}$. By possibly removing the single bad $m$ from $A$, we are done.
\end{proof}

\begin{proof}[Proof of Theorem \ref{thm:ugeqd}]
Assume that $\mathcal{U}_n$ for $n \in \omega$ are pairwise non-isomorphic $Q$-points. Let $\mathcal{I}$ be a $Q$-test family and $\mathcal{T}$ a test family of finite-to-one functions, both over the Fr\'echet filter and both of cardinality $\mathfrak{d}$. For each interval $I \in \mathcal{I}$, let $g_I$ be the finite-to-one function that is constantly equal to $n$ on the $n$-th interval of $I$. Let $\{f_\alpha: \alpha < \mathfrak{d}\}$ be an enumeration of $\mathcal{T}\cup \{g_I: I \in \mathcal{I}\}$. By induction on $\mathfrak{d}$, we construct for each $\varphi \in {^\mathfrak{d}}2$ an ultrafilter $\mathcal{V}_\varphi$ such that the ultrafilters $\mathcal{V}_\varphi-\lim_n \mathcal{U}_n$ are pairwise non-isomorphic $Q$-points.

At stage $\alpha+1 \in \mathfrak{d}$, assume that for each $\eta \in {^\alpha}2$ a filter $F_\eta$ with $\chi(F_\eta)<\mathfrak{d}$ is given. Applying Lemma \ref{lem:ketonen}, we find $A \in [\omega]^\omega$ intersecting every element of $\mathcal{F}_\eta$ such that $\{f_\alpha(\mathcal{U}_n): n \in A\}$ is discrete with at most one limit point among all the $f_\alpha(\mathcal{U}_n),\; n \in \omega$. Since $\mathcal{F}_\eta$ is $<\mathfrak{u}$-generated, there is a decomposition of $A_\eta$ into disjoint sets $\A_\eta^0$ and $A_\eta^1$ that both intersect every element of $\mathcal{F}_\eta$. Add $A_\eta^i$ to $\mathcal{F}_\eta$ to obtain distinct $F_{\eta^{\smallfrown}0}$ and $F_{\eta^{\smallfrown}1}$.

At limit stages, we take unions of the previously constructed filters. Finally, let $\mathcal{V}_\varphi$ be any ultrafilter extending $\bigcup_{\alpha \in \mathfrak{d}}\mathcal{F}_{\varphi \restriction \alpha}$. By construction, the $\mathcal{V}_\varphi$ are pairwise distinct.

By the same argument as in the proof of Theorem \ref{thm:ulessd}, each $\mathcal{V}_{\varphi}-\lim_n \mathcal{U}_n$ is a $Q$-point. It remains to check that $\mathcal{V}_{\varphi}-\lim_n \mathcal{U}_n$ and $\mathcal{V}_{\psi}-\lim_n \mathcal{U}_n$ are non-isomorphic for $\varphi\neq \psi$.

Since $f(\mathcal{V}-\lim_n \mathcal{U}_n)=\mathcal{V}-\lim_n f(\mathcal{U}_n)$ for any $f\in {^\omega}\omega$ and since $\{f_\alpha: \alpha \in \mathfrak{d}\}$ is a test family, it suffices to show that \begin{equation}\label{eq:limits}
\mathcal{V}_{\varphi}-\lim_n f_\alpha(\mathcal{U}_n) \neq \mathcal{V}_{\psi}-\lim_n f_\alpha(\mathcal{U}_n),\;\text{ for all $\alpha<\mathfrak{d}$.}
\end{equation}
We find $A \in \mathcal{V}_\varphi$ and $B \in \mathcal{V}_\psi$ such that $\{f_\alpha(\mathcal{U}_n): n \in A\}$ and $\{f_\alpha(\mathcal{U}_n): n \in B\}$ are discrete, with at most one limit point among $\{f_\alpha(\mathcal{U}_n): n \in \omega\}$. Hence, by removing at most one element from $A$ and at most one element from $B$, we may assume that $\{f_\alpha(\mathcal{U}_n): n \in A \cup B\}$ is discrete. Since $A \cup B \in \mathcal{V}_{\varphi}\cap \mathcal{V}_\psi$, equation (1) follows from Lemma \ref{lem:rudin}.

\end{proof}

\section{Closing Remarks and Questions}\label{sec:questions}

As mentioned in the introduction, we do not know the answer to the following question.

\begin{question}
Is there a model with exactly $2^{\mathfrak{d}}$-many $Q$-points and $2^{\mathfrak{d}}<2^{\mathfrak{c}}$?
\end{question}
It follows from our main theorem that such a model would need to satsify the following.
\begin{enumerate}[label=(\roman*)]
\item $\operatorname{cov}(\mathcal{M})<\mathfrak{u}=\mathfrak{d}<\mathfrak{c}$,
\item every $Q$-point is $(\mathfrak{u}=\mathfrak{d})$-generated,
\item every $\omega^{\ast}$-discrete ultrafilter is $(\mathfrak{u}=\mathfrak{d})$-generated. In particular, every $P$-point is $(\mathfrak{u}=\mathfrak{d})$-generated.
\item there are at most finitely many Ramsey ultrafilters.
\end{enumerate}





Our second question concerns the relationship between $Q$-filters that extend to infinitely many ultrafilters and infinite closed sets of $Q$-points. In particular,

\begin{question}
Is the existence of an infinite closed set of $Q$-points equivalent to the existence of a $Q$-filter $\mathcal{F}$ such that $[\mathcal{F}]$ is infinite?
\end{question}

It is not hard to verify that a closed set $[\mathcal{G}]$ in $\omega^{\ast}$ consists entirely of $Q$-points if and only if $\mathcal{G}$ has the following property: For any interval partition $I$ of $\omega$, there exists some $A \in \mathcal{G}$ and some $n \in \omega$ such that $A$ intersects every interval in $I$ in at most $n$ points.

\bibliographystyle{plain}
\bibliography{bib}

\end{document}